\begin{document}

\begin{center}
\vspace{0.6cm}

{ \bf \LARGE   The Effect of Singularization on the Euler Characteristic}

\vspace{0.6cm}
\large 
M.A. de Jesus Zigart \footnote{Partially  financed  by the Coordenação de Aperfeiçoamento de Pessoal de Nível Superior – Brasil (CAPES) – Finance Code 001}
\hspace{0.3cm}
K.A. de Rezende\footnote{Partially supported by CNPq under grant 305649/2018-3 and FAPESP under grant 2018/13481-0.}
\hspace{0.3cm}
N.G. Grulha Jr. \footnote{Supported by CNPq under grant 303046/2016-3; and  FAPESP under grant 2017/09620-2.}
\hspace{0.3cm}
D.V.S. Lima
\hspace{0.3cm}

\end{center}

\vspace{0.5cm}

\begin{abstract}

In this work, singular surfaces are obtained from smooth orientable closed surfaces by applying three basic simple loop operations,  \textit{collapsing operation}, \textit{zipping  operation} and \textit{double loop identification}, each of which produces different singular surfaces.  A formula that provides the Euler characteristic of the singularized surface is proved.  Also, we introduce a new definition of genus for singularized surfaces which generalizes the classical definition of genus in the smooth case. A theorem relating the Euler characteristic to the genus of the singularized surface is proved.
\end{abstract}

\vspace{0.5cm}

\textbf{MSC 2010:} 14B05; 14J17; 55N10.

\vspace{0.25cm}

\section{Introduction}

Singularities  appear in several fields of study as a sign of qualitative change. We may experience them in Calculus, representing maximum or minimum points of a function; in Dynamical Systems, as stationary solutions that characterize the behaviour of solutions in their vicinity; or in Physics, where they can appear on larger scales, for instance, when a massive star undergoes a gravitational collapse after exhausting its internal nuclear fuel, which can lead to the birth of black holes or naked singularities, the latter being discussed as potential particle accelerators, acting like cosmic super-colliders \cite{patil2010naked,patil2011naked}. The formation of these so called spacetime singularities is a more general phenomena in which general theory of relativity plays an important role \cite{joshi2014spacetime}. And the most appealing example of such singularity is perhaps the Big Bang.  

Singularity Theory, structured as a field of research, has risen from the work of \textcolor{black}{Hassler Whitney, John Mather and René Thom. } It is the field of science dedicated to studying singularities in their many occurances. One approach is to consider the embedding of a $m$-dimensional smooth manifold in an Euclidean space of dimension lower than $2m$. According to Whitney's embedding theorem \cite{whitney1944self}, by doing so, the embedding will necessarily cause the manifold to self-intersect, originating singular sets. An interesting question is to consider the types of singularities produced in this fashion and to study their stability. 

In addition, understanding how smooth manifolds come together at their singular points is another concern in Singularity Theory, studied oftenly under resolutions of singular algebraic surfaces, that is, surfaces given as the zeroset of one polynomial in three variables. Resolving a singular surface means trying to find a surjective map from a smooth surface to the singular surface, which is an isomorphism almost everywhere.

Finding blowups, which make up the resolution map, can be highly nontrivial \cite{faber2010today}. However, one can perform the inverse process, called a blowdown,   to produce singularities on a smooth surface   by contracting a hypersurface. It is clear that this construction yields the resolution of the singularity obtained this way. But the singularity produced by an arbitrary blowdown may not be singularity one hopes for.

This interplay between blowdowns and blowups can be depicted, for instance, by a continuous deformation of a torus onto a pinched torus. In this scenario, both maps are given intuitively by the initial and final stage of this deformation, as one goes back  and forth  in time, blowup and blowdown, respectively. 
The deformation gives rise to a family of surfaces called a smoothing of the singular surface, 
 which in this case is produced by a vanishing cycle.

\begin{figure}[!h] 
\centering

  \begin{tikzcd} \hspace{0.2cm}
  \begin{overpic}[align=c,scale=0.5]{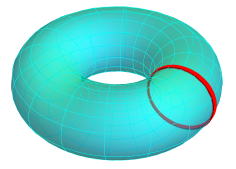}
  \end{overpic} \hspace{1cm}
    \begin{overpic}[align=c,scale=0.5]{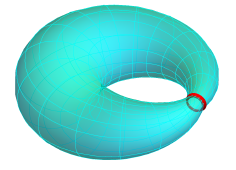}
    \end{overpic} \hspace{1cm}
    \begin{overpic}[align=c,scale=0.5]{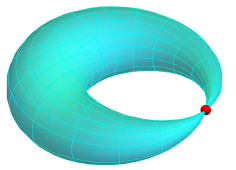}
    \end{overpic}
  \end{tikzcd}

\caption{Continuous deformation of the torus (left-most) onto the pinched torus (right-most)}
\end{figure}

The study of deformations, smoothings, vanishing cycles, unfoldings and bifucartions are oftenly posed, along with resolutions, in order to understand the topology of singular spaces. 
 Brasselet \textcolor{black}{ \textit{et al.}}, in \cite{brasselet2009vector}, propose the study by vector field methods, which is very useful to compute the Euler characteristic of such surfaces. For the singular surfaces given as the image of stable mappings, a formula that computes their Euler characteristic was proved in \cite{izumiya1995topologically} by Izumiya and Marar.

The Euler characteristic is a topological invariant that fascinates by its simplicity and yet many applications. It is part of a family of morphological measures, called Minkowski functionals,
 and represents a very compact way of characterizing the connectivity of complex image structures, since it is invariant  under deformation or scaling. Moreover, its additivity makes it possible to extend the practical application of Minkowski functionals beyond convex bodies. They've been applied in the topological study of the density distribution of galaxies in astrophysics \cite{schmalzing1995minkowski,schmalzing1997beyond}, to improve the diagnosis of osteoporosis \cite{rath2008strength}, X-ray analysis in digital mammography \cite{boehm2008automated} and to enhance the accuracy of brain tumor classification \cite{huml2013brain}. 

In this paper, we consider quotient maps inspired on blowdowns to produce singular surfaces from a family of smooth closed ones. In this more abstract setting, we do not need the concept of stable mappings, but we can say that our operations were inspired by the study of stable singularities. The types of singularities obtained here are: cones, cross-caps and double crossings. And then, we study the resulting singular surfaces under the light of their Euler characteristic.

\section{Preliminaries}

\textcolor{black}{Certainly among the most fruitful and beautiful formulas in the history of Mathematics}, is the Euler formula for a  polyhedron $P$ given by
 $\mathcal{X} (P)= V - E + F=2 $, where $V$ is the number of vertices, $E$ the number of edges and $F$ the number of faces. See \cite{richeson2019euler}.
This beloved formula has entertained mathematicians such as Euler, Descartes, Cauchy and Lhuilier who gave its final form $\mathcal{X} (S)= 2-2g $,  for what is now known as a smooth closed connected orientable surface $S$ of genus $g$.  
\textcolor{black}{Remarkably, this Euler characteristic determines precisely  the  closed surface up to homeomorphism.}

One can say that an entire field, Algebraic Topology was innaugurated
 by Henri Poincar{\'e}, inspired by this formula. To state it as simply as possible, let $K$ be a simplicial complex. 
 Poincar{\'e} considered  vector spaces $C_i$ (over $Z_2$), of $i$-chains on $K$, \textcolor{black}{where the sum is defined as the union minus the intersection of the $i$-chains}. He wanted to measure the presence of special $i$-chains, called $i$-cycles that were not the border of an $i+1$-chain. This was accomplished by taking the quotient space of the space of $i$-cycles of  $K$, $Z_{i}(K)$ by all the $i$-cycles that are boundaries of $i+1$-chains, $B_{i}(K)$. Thus, this quotient space is called the $i$-th homology of $K$ and denoted by $\displaystyle {H_{i}(K)= {Z_{i}(K)\over B_{i}(K)}}$. The rank of $H_{i}(K)$ is the {\it $i$-th Betti number} of $K$, $\beta_{i}(K)$. The Euler characteristic is defined as  the alternating sum of the Betti numbers of $K$.

\begin{figure}[h]\label{fig:homology} 
\centering
  \begin{tikzcd}
  \begin{overpic}[align=c,scale=0.5]{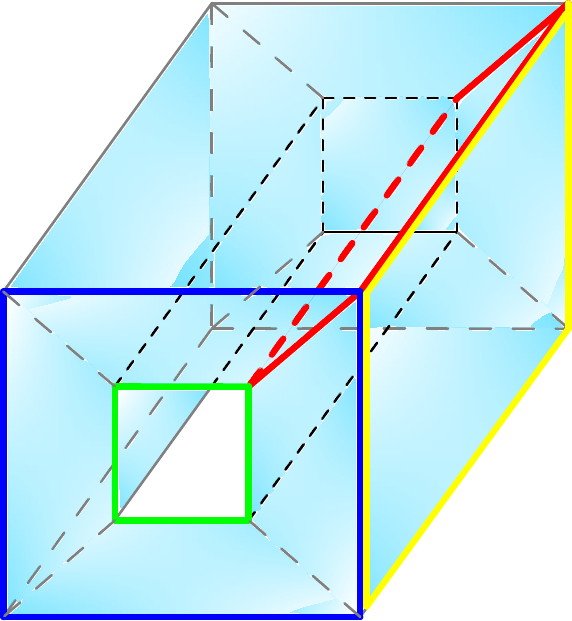}
  \put(130,40){$\longrightarrow$}
  \end{overpic} \hspace{3cm}
  \begin{overpic}[align=c,scale=0.5]{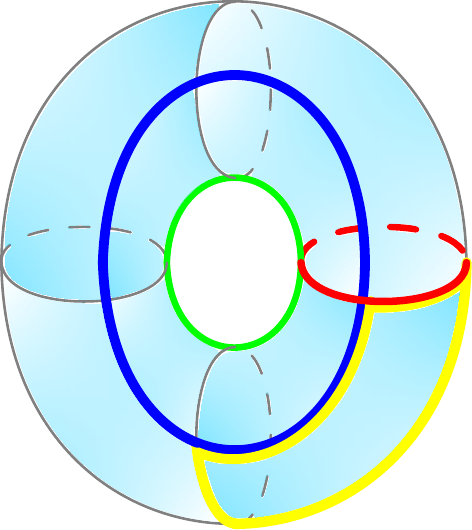}
  \put(140,80){\textcolor{green}{---}}
  \put(155,80){$1$-cycle $\alpha_{1}$}
  \put(140,60){\textcolor{blue}{---}}
  \put(155,60){$1$-cycle $\alpha_{2}$}
  \put(140,40){\textcolor{red}{---}}
  \put(155,40){$1$-cycle $\alpha_{3}$}
  \put(140,20){\textcolor{yellow}{---}}
  \put(155,20){$1$-cycle $\alpha_{4}$}
  \end{overpic} \hspace{5cm}
  \end{tikzcd}
\caption{Homology on a torus}
\end{figure}

In our $2$-dimensional context, $1$-cycles which belong to $Z_{1}(P)$  and are not in $B_{1}(P)$, play a very important role. See Figure \ref{fig:homology}.
Two $1$-cycles are equivalent, more precisely homologous, if they form the boundary of a $2$-chain.
 For instance, in Figure \ref{fig:homology}, $\alpha_{1}$, $\alpha_{2}$, $\alpha_{3}$ and $ \alpha_{4}$ all belong to $Z_{1}(P)$. However, $\alpha_{1}$ 
and $\alpha_{2}$ are homologous and  $ \alpha_{4}$ is also in $B_{1}(P)$. Hence, $\{\alpha_{1},\alpha_{3}\}$ is a basis of $\displaystyle {H_{1}(P)= {Z_{1}(P)\over B_{1}(P)}}$ 
 and consequently, $\beta_{1}(P)= \text{rank} H_{1}(P)=2$.
 Clearly, all $0$-cycles are homologous if $P$ is connected. Hence, $\beta_0(P) = \text{rank} H_0(P)=1$. Also, there is  only one $2$-cycle, $P$ itself, that forms the basis of $H_2(P)$.  Hence, 
$\beta_2 (P)= \text{rank} H_2(P)=1$. The Euler characteristic of $P$ is
$    \mathcal{X}(P) \ = \ \beta_{0}(P) - \beta_{1}(P) + \beta_{2}(P)=0$.

 It can be shown that for a smooth connected surface $S$ with $g$ handles,  $\beta_0(S)=\beta_2(S)=1$ and that
each handle contributes with two 1-cycles to the basis of $H_1(S)$.  Hence, $\beta_1(S)=2g$ and  $\mathcal{X}(S) \ = \ \beta_{0}(S) - \beta_{1}(S) + \beta_{2}(S)=2-2g.$

{
Two important properties of the Euler characteristic that will be used henceforth are:\\
\begin{itemize}
\item[i) ][Homotopy invariance] Let $A$ and $B$ be two homotopically equivalent spaces. Then, one has: $$\mathcal{X}(A) \ \ = \ \ \mathcal{X}(B).$$
\item[ii) ][Inclusion-exclusion principle] Let $A$ and $B$ be any two closed sets. Then, the following equality holds: $$\mathcal{X}(A \cup B) \ \ = \ \ \mathcal{X}(A) + \mathcal{X}(B) - \mathcal{X}(A \cap B).$$
\end{itemize}
}

\textcolor{black}{In what follows, the overarching idea is to understand the topology of a singular manifold by studying a family of smooth manifolds that degenerate to it.}

 A very \textcolor{black}{well-known} and elementary example of passing from a smooth surface to a singular surface, is the family of surfaces obtained from the inverse images of the function  $f: \mathbb{R}^{3} \rightarrow \mathbb{R}$, given by $f(x,y,z) = x^{2} + y^{2} - z^{2}$. Note that $f^{-1}(1)$ is a smooth surface, \textcolor{black}{a one-sheet} hyperboloid, while $f^{-1}(0)$ is a singular surface, more specifically a double cone. By considering the surfaces $f^{-1}(t)$ obtained by varying $t$ continuously from $t = 1$ to $t = 0$, one can see a circle whose radius is decreasing until the circle degenerates into a point at the level curve  $z = 0$. This contraction is responsible for the birth of the singular cone point and consequently of the singular surface. 
 
 One can visualize a similar situation in a polyhedron setting. See Figure \ref{fig:polin}.
 \vspace{0.2cm}

\begin{figure}[h] 
  \begin{tikzcd} \hspace{0.2cm}
  \begin{overpic}[align=c,scale=0.6]{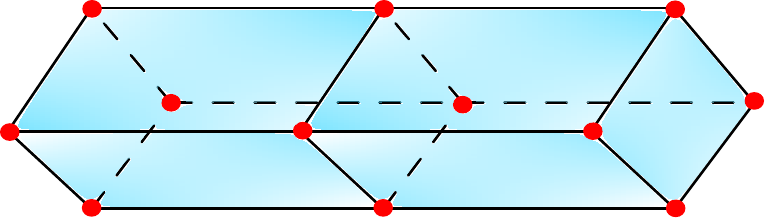}
  \put(10,35){$P$}
  \end{overpic} \hspace{0.8cm}
    \begin{overpic}[align=c,scale=0.6]{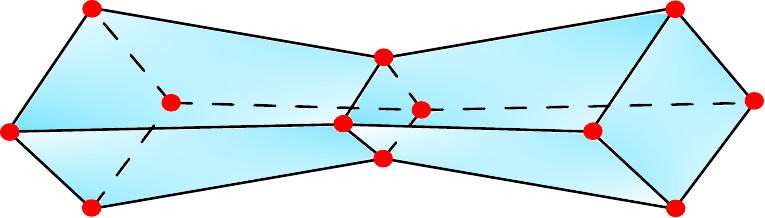}
    \end{overpic} \hspace{0.8cm}
    \begin{overpic}[align=c,scale=0.6]{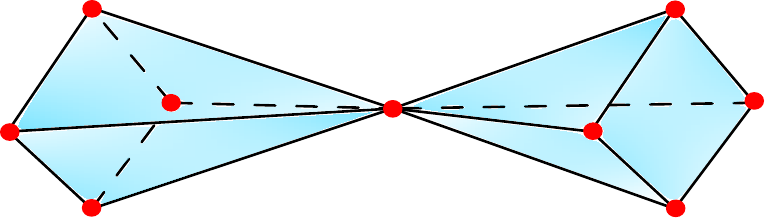}
    \put(86,35){$P'$}
    \end{overpic}
  \end{tikzcd}
\caption{Polyhedron $P$  (left-most) collapsing to polyhedron $P'$ (right-most).}\label{fig:polin}
\end{figure}

It is quite interesting to see the effect that this degeneracy has on the  Euler characteristic . 
The collapsing of the middle one cycle in $P$ to a vertex has the net effect of removing three vertices and four edges from the formula \textcolor{black}{$\mathcal{X} (P)= V - E + F $, where $V$, $E$, and $F$ are, respectively, the number of vertices, edges, and  faces of the polyhedron $P$. See Figure~\ref{fig:polin}. }
$$\mathcal{X}(P') = V' - E' + F' = (V - 3) - (E - 4) + F = V - E + F + 1 = \mathcal{X}(P) + 1.$$

In this work, we will consider this contraction and refer to it as a \textcolor{black}{\it  collapsing operation}. Two other operations on the images of loops on smooth surfaces are considered\textcolor{black}{: {\it zipping} and {\it double loop identification}}  \textcolor{black}{both of} which produce singular surfaces.

Let $M$ be a compact connected orientable surface.  The surface $M$ is of type $(g,b)$ if it has genus $g$ and $b$ boundary components and is denoted by $M_{g,b}$. If $b = 0$, the surface is denoted by $M_{g}$ \textcolor{black}{ and called a {\it closed} surface}. A \textit{loop} in $M$ is a \textcolor{black}{smooth} map $\alpha: S^{1} \rightarrow M$, which is identified to its image in  $M$. All loops will be orientation preserving. A loop is \textit{simple} if 
$\alpha$ is injective, that is,  $\alpha$ has no self-intersection. A loop is \textit{trivial} in $M$ if it is homotopic to a point.  Two loops are {\it cobordant} if the two loops bound a subsurface.

Suppose that the smooth closed surface $M$ is embedded  in $\mathbb{R}^{3}$,  \textcolor{black}{thus orientable}.  This embedding of $M$ partitions $\mathbb{R}^{3}$ into a bounded region $\mathbb{I}$ and an unbounded region $\mathbb{O}$ so that 
$\mathbb{I} \cap \mathbb{O} = M$ and such that $\mathbb{I} \cup \mathbb{O} = \mathbb{R}^{3}$. 

\newtheorem{Def}{Definition}

\begin{Def}
A simple loop $\alpha: S^{1} \rightarrow M$ is called a \textbf{handle loop} if it is trivial in the homology of $\mathbb{I}$ and non-trivial in the homology of $\mathbb{O}$. A \textbf{tunnel loop} is trivial in the homology of $\mathbb{O}$ and  non-trivial in the homology of $\mathbb{I}$.  
\textcolor{black}{ Whenever $(M - \alpha)$ is not connected, $\alpha$ is called a  \textbf{separating loop}. In this case, we say that $\alpha$ splits $M_{g}$ in two disjoint subsurfaces of genus $k$ and $g - k$.}
 
\end{Def}

\begin{figure}[h] 
\centering
  \begin{tikzcd} \hspace{0.2cm}
  \begin{overpic}[align=c,scale=0.4]{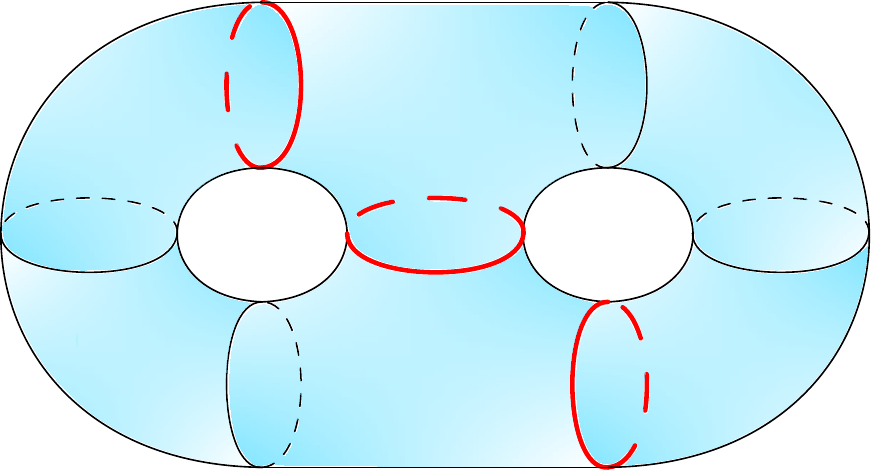}
  \end{overpic} \hspace{1cm}
    \begin{overpic}[align=c,scale=0.4]{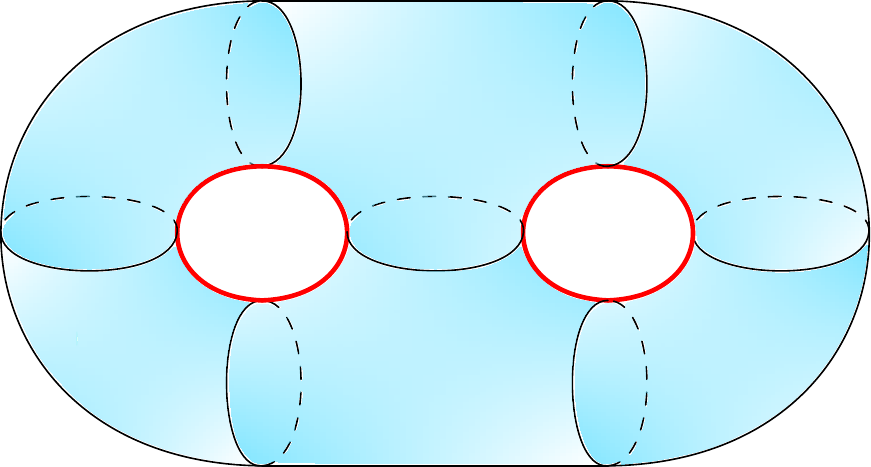}
    \end{overpic} \hspace{1cm}
    \begin{overpic}[align=c,scale=0.4]{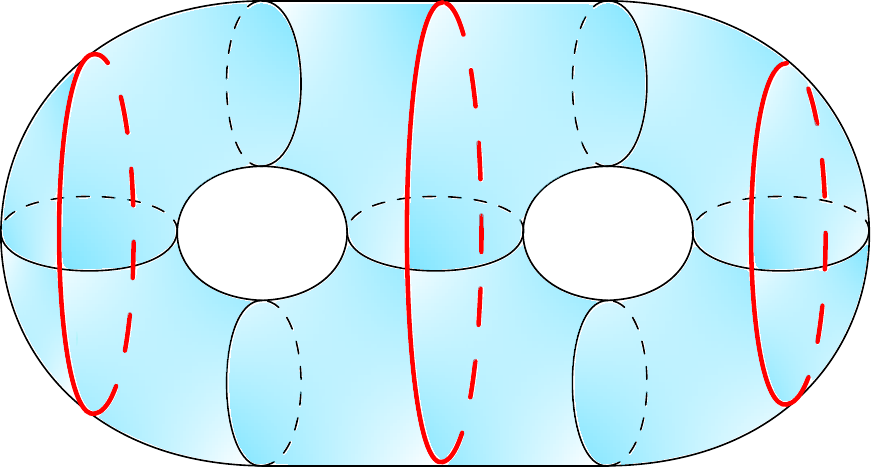}
    \end{overpic}
  \end{tikzcd}
\caption{Examples of handle (left-most), tunnel (center) and separating loops (right-most) in red.}
\end{figure}

\subsection{Simple loop operations}

 \textcolor{black}{Now} we define  \textcolor{black}{the} operations that can be performed on simple loops to create singular  \textcolor{black}{surfaces}.  

\begin{Def}\label{def:operacoes}
Let $\alpha: S^{1} \rightarrow M$ and $\beta: S^{1} \rightarrow M'$ be simple loops  \textcolor{black}{each of which are} either separating, handle or tunnel; and $M$ and $M'$ are smooth closed orientable surfaces, possibly the same. Define  \textcolor{black}{the following operations, which will be called \textbf{simple loop operations}}: 

\begin{enumerate}
    \item \textbf{Collapsing} of $\alpha$: consider a  disk $D$, up to homeomorphism, bounded by $\alpha$ such that it  
    is contractible to a point $p$ in the complement of $M$ in $\mathbb{R}^3$. The collapsing of  $\alpha$ is the retraction of $D$ to $p$.

    \begin{figure}[htb]
        \centering
        \includegraphics[scale=0.45]{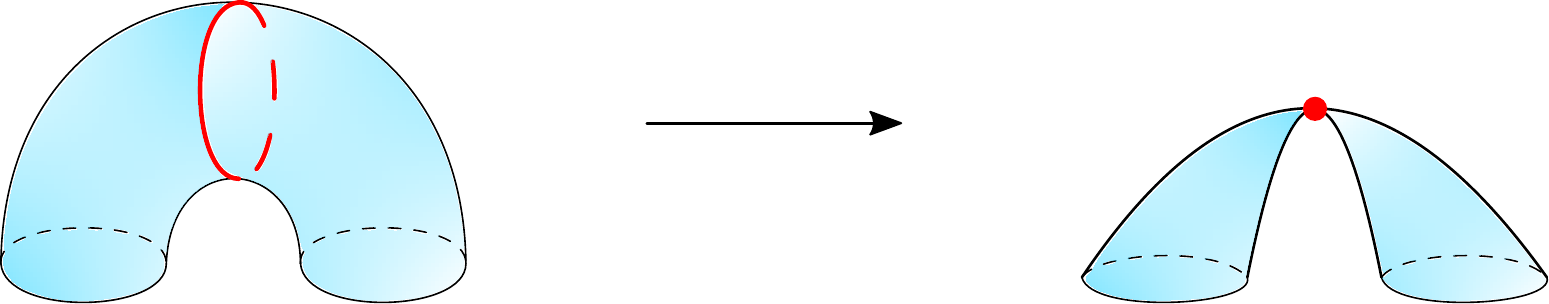}
        \caption{Example of collapsing}
        \label{fig:collapsing}
    \end{figure}
    
    \item \textbf{Zipping} of $\alpha$: consider a  disk $D$, up to homeomorphism, bounded by $\alpha$ such that it is contractible to a curve $d$ joining two distinct points on $\alpha$ in the complement of $M$ in $\mathbb{R}^3$. The zipping of $\alpha$ is the retraction of $D$ to $d$.

    \begin{figure}[htb]
        \centering
        \includegraphics[scale=0.45]{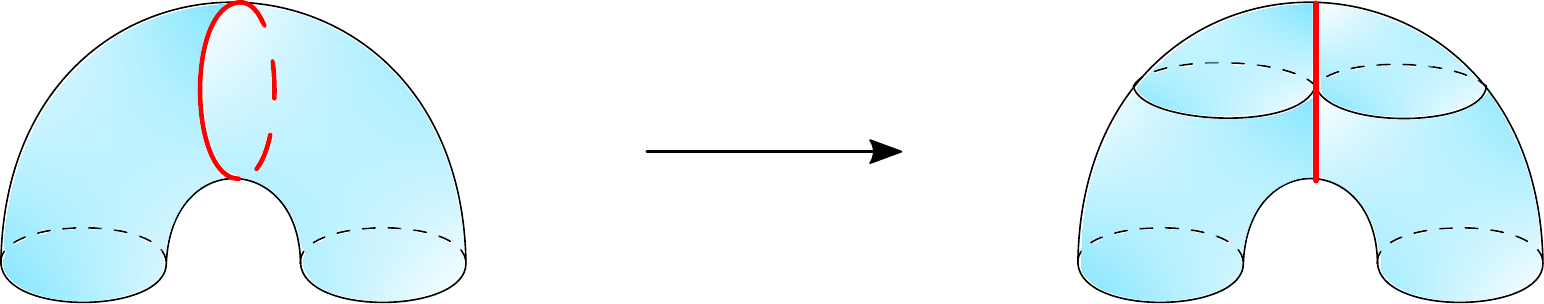}
        \caption{Example of zipping}
        \label{fig:folding}
    \end{figure}
    
    \item \textbf{Double loop identification} of $\alpha$ and $\beta$: the loops $\alpha$ and $\beta$ are identified, via some orientation preserving  homeomorphism $h: \alpha \rightarrow \beta$;

    \begin{figure}[htb]
        \centering
        \includegraphics[scale=0.45]{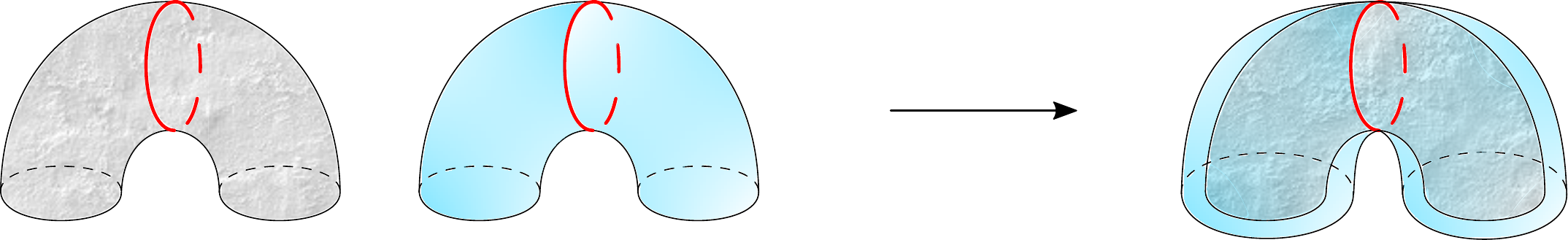}
        \caption{Example of double loop identification}
        \label{fig:identification}
    \end{figure}
    
\end{enumerate}
\end{Def}

Once a specific loop operation is performed, the resulting singularity or  singular set can be easily identified and vice-versa.
 \textcolor{black}{Typically}, the operations that are chosen in a singularization process, see Definition \ref{def-4}, are based on the  \textcolor{black}{type of non-manifold set components}
  that one wants the singular surface to possess. 

In this manner, note that by applying the  \textcolor{black}{collapsing operation} one obtains surfaces of  revolution such as an \textit{eight surface} (figure \ref{fig:eight}) and a \textit{horn torus} (figure \ref{fig:horn}), the latter appears as a cyclical model of the Universe.  

\begin{figure}[htb]
	\centering
	\begin{minipage}[b]{0.4\linewidth}
		 \begin{equation*}
            \begin{tikzcd}
                \includegraphics[scale=0.25]{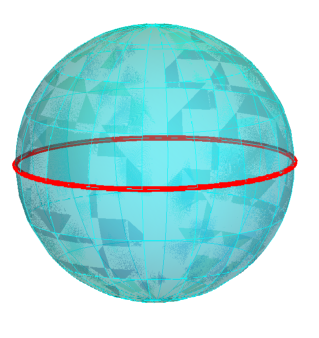} \arrow{rr}{Collapsing}  & & \includegraphics[scale=0.25]{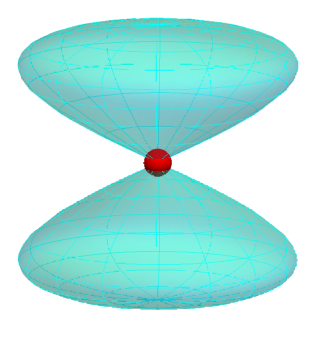} 
            \end{tikzcd}
        \end{equation*}
		\caption{Eight Surface}	
		\label{fig:eight}
	\end{minipage}
	\hfill
	\begin{minipage}[b]{0.5\linewidth}
		\begin{equation*}
            \begin{tikzcd}
                \includegraphics[scale=0.27]{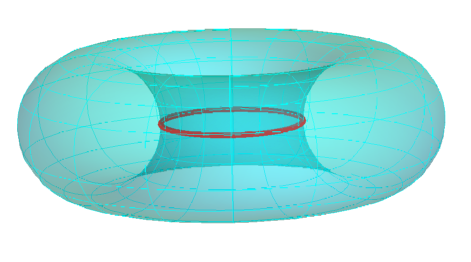} \arrow{rr}{Collapsing}  & & \includegraphics[scale=0.27]{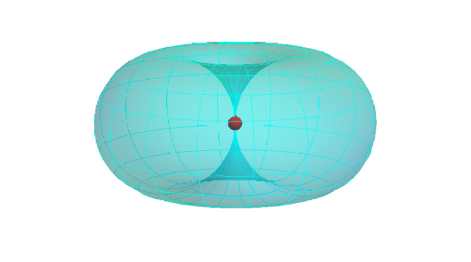} 
            \end{tikzcd}
        \end{equation*}
		\caption{Horn Torus}
		\label{fig:horn}
	\end{minipage}
\end{figure}

On the other hand, the \textcolor{black}{zipping operation} appears in the family of surfaces parametrized by $\phi(u,v) = ((a+b*cos(v))*cos(u), (a + b*cos(v))*sin(u),b*sin(v)*cos(ku))$, for $a > b > 0$. The number of \textit{folds} present in a surface of this family varies for different values of $k$:

\begin{figure}[h!]
	\begin{minipage}[b]{0.3\linewidth}
		 \begin{equation*}
            \begin{tikzcd}
                \includegraphics[scale=0.26]{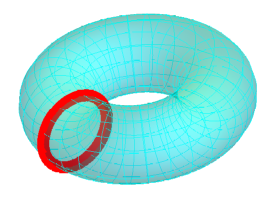} \arrow{r}{Zipping}  & \includegraphics[scale=0.26]{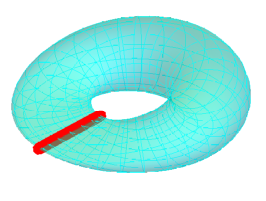} 
            \end{tikzcd}
        \end{equation*}
		 \caption{$\phi(u,v)$ with $k = 0.5$}	
		\label{fig:first_sine_torus}
	\end{minipage}
	\hfill
	\begin{minipage}[b]{0.59\linewidth}
		\includegraphics[scale=0.28]{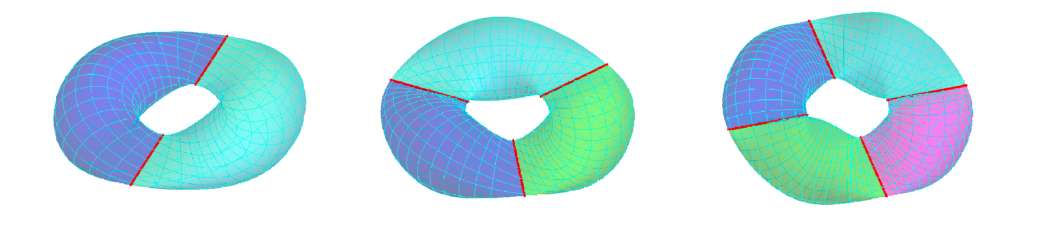}
		\caption{$\phi(u,v)$ with $k = 1, \ k = 1.5, \ k = 2$ (resp.)}
		\label{fig:final_sine_torus}
	\end{minipage}
\end{figure}

The tori chain in Figure~\ref{fig:torichain} illustrates a singular surface obtained by double loop identifications.

\begin{figure}[htb!]
    \begin{equation*}
            \begin{tikzcd}
                \includegraphics[scale=0.17]{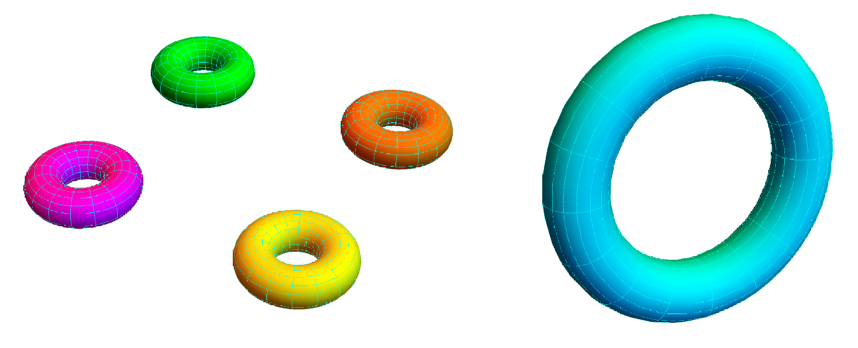} \xrightarrow[identification]{Double \ loop} \includegraphics[scale=0.17]{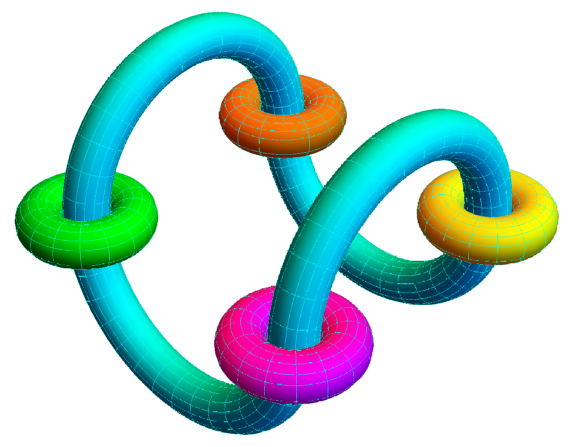} 
            \end{tikzcd}
        \end{equation*}
		\caption{Tori chain}
		\label{fig:torichain}
\end{figure}

\begin{Def}\label{def:loop_col}
Given a smooth, closed, connected, orientable surface $M_g$, let $\mathcal{L}(M_{g})$ be a collection of disjoint handle, tunnel, or separating loops in $M_g$ with a  simple loop operation assigned to each one. This  definition  can be extended   to a  disjoint union of surfaces, $\sqcup_{i=1}^{n}{M_{g_{i}}}$. 
\end{Def}

For a  finite collection of smooth connected orientable closed surfaces, $\{M_{g_{i}} \ | \ i = 1, \ldots, n\}$, define $G = \sum_{i=1}^{n}{g_{i}}$ as the {\it genus} of  $\ \sqcup_{i=1}^n{M_{g_{i}}}$.   

\begin{Def}\label{def-4}
A   \textbf{singularization} of a finite collection $\{M_{g_{i}} \ | \ i = 1, \ldots , n\}$ of smooth, closed, connected, orientable surfaces will be attained from   $\mathcal{L}(\sqcup_{i=1}^{n}{M_{g_{i}}}) $   by performing the simple loop operations assigned therein, and denoted by  $S(\sqcup_{i=1}^{n}{M_{g_{i}}})$.
\end{Def}

Examples of \textcolor{black}{such} singularization can be seen in Section~\ref{sec:Euler_formulas}.

We also note that the resulting singular surface $S(\sqcup_{i=1}^{n}{M_{g_{i}}})$ may not be connected.

\section{\textcolor{black}{Effect of Loop Operations  on the Euler  Characteristic}}\label{sec:Euler_formulas}

In this section we will study the effect that the singularization of a smooth surface $M_g$ (respectively, a disjoint union of smooth surfaces $\sqcup_{i=1}^{n}{M_{g_{i}}}$) has on the Euler characteristic of  $M_g$ (respectively, $\sqcup_{i=1}^{n}{M_{g_{i}}}$).

The next theorem \textcolor{black}{describes the effect on the original smooth surface's Euler characteristic after the simple loop operations are performed. In other words, Theorem \ref{teo:forte}  describes how $\mathcal{X}(S)$ can be computed from $\mathcal{X}(M_g)$.
A collapsing or zipping operation  adds one to the  Euler characteristic $\mathcal{X}(M_g)$, whereas  a double loop identification leaves it unchanged.}

\newtheorem{Teo}{Theorem}    
   
\begin{Teo}\label{teo:forte}
Let $S(\sqcup_{i=1}^{n}{M_{g_{i}}})$ be a singularized surface obtained from the collection $\mathcal{L}(\sqcup_{i=1}^{n}{M_{g_{i}}}) $. Then the Euler characteristic of $S(\sqcup_{i=1}^{n}{M_{g_{i}}})$ is independent of the number of double loop identifications performed and is equal to: $$\mathcal{X}(S(\sqcup_{i=1}^{n}{M_{g_{i}}})) \ = \ \left(\sum_{i=1}^{n}{\mathcal{X}(M_{g_{i}})}\right) + C + Z \ = \ 2n - 2G + C + Z,$$ where $C$ is the number of  collapsing  operations, $Z$ is the number of  zipping  operations and $G = \displaystyle\sum_{i=1}^{n}{g_{i}}$.
\end{Teo}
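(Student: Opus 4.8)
The plan is to prove the formula by isolating the effect of a single simple loop operation and then summing these local contributions. The reduction to the single-operation case rests on the disjointness built into Definition~\ref{def:loop_col}: since the loops in $\mathcal{L}(\sqcup_{i=1}^{n}M_{g_i})$ are pairwise disjoint, each operation is supported in a closed tubular neighborhood of its loop (or, for a double identification, of its pair of loops), and these neighborhoods may be chosen pairwise disjoint and disjoint from the singular set created by the other operations. Hence the operations commute, may be performed in any order, and their effects on $\mathcal{X}$ add; by induction on the total number of operations it suffices to compute the change $\Delta\mathcal{X}$ produced by one operation. The base case is the smooth disjoint union, for which $\mathcal{X}(\sqcup_{i=1}^{n}M_{g_i}) = \sum_{i=1}^{n}(2-2g_i) = 2n-2G$.

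For a single operation I would localize using the inclusion–exclusion principle (property (ii)). Let $N$ be a closed tubular neighborhood of the loop $\alpha$ (an annulus), or of the pair $\alpha,\beta$ (two annuli); let $S$ denote the surface before the operation, $S'$ the surface after, and $N'$ the image of $N$ in $S'$. Since the operation alters nothing outside $N$ and fixes the boundary circles $\partial N$ along which $N$ is glued to its complement, writing $S = \overline{S\setminus N}\cup N$ and $S' = \overline{S\setminus N}\cup N'$, applying inclusion–exclusion to each, and subtracting gives
\[
  \mathcal{X}(S') - \mathcal{X}(S) \;=\; \mathcal{X}(N') - \mathcal{X}(N),
\]
because $\overline{S\setminus N}$ and $\partial N$ contribute identically to both. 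As $N$ is one or two annuli, $\mathcal{X}(N)=0$, so the whole change is carried by $\mathcal{X}(N')$.

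It then remains to compute $\mathcal{X}(N')$ for each operation, where homotopy invariance (property (i)) does the work. For a collapsing, $N'$ is a double cone, i.e.\ two disks joined at their apex, so $\mathcal{X}(N')=1+1-1=1$ and $\Delta\mathcal{X}=+1$. For a zipping, $N'$ is the neighborhood obtained by folding the core circle onto the arc $d$; it deformation retracts onto $d$, which is contractible, so $\mathcal{X}(N')=1$ and again $\Delta\mathcal{X}=+1$. For a double loop identification, $N'$ is the union of two annuli glued along their common core circle, which deformation retracts onto that circle, so $\mathcal{X}(N')=0$ and $\Delta\mathcal{X}=0$. Summing over all operations — $C$ collapsings and $Z$ zippings each contributing $+1$, and every double identification contributing $0$ — yields
\[
  \mathcal{X}\big(S(\sqcup_{i=1}^{n}M_{g_i})\big) \;=\; \Big(\sum_{i=1}^{n}\mathcal{X}(M_{g_i})\Big) + C + Z \;=\; 2n-2G+C+Z,
\]
which is manifestly independent of the number of double loop identifications.

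I expect the main obstacle to be the zipping computation: one must justify that the folded neighborhood $N'$ genuinely retracts onto the arc $d$ — so that its image is an arc with $\mathcal{X}=1$ rather than a point or a circle — which requires describing precisely how the $2$-to-$1$ fold map on $\alpha$ extends over the annulus $N$. A secondary point needing care is the rigor of the reduction step: one must verify that localizing in pairwise disjoint neighborhoods truly renders the contributions additive and order-independent, in particular for a double identification of two loops lying on the same surface, or on a component already modified by earlier operations, since in those cases the two copies of inclusion–exclusion must still share an unchanged complement and boundary.
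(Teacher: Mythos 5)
Your proof is correct, and it takes a genuinely different route from the paper's. The paper proves Theorem~\ref{teo:forte} by assembling three global lemmas: Lemma~\ref{lem:EulerCone} treats collapsing via a case analysis of how the Betti numbers change according to the type of loop (separating, tunnel, or handle, with a separate argument for families of cobordant handle loops) and treats zipping by exhibiting a homotopy equivalence between the zipped and the collapsed surface; Lemmas~\ref{lem:duplo} and~\ref{lem:duplo2} handle double loop identifications by inclusion--exclusion over the smooth pieces, the same-surface case splitting again into cobordant and non-cobordant subcases (gluing a cylinder up to homotopy, or cutting $M_g$ into $M_k$ and $M_{g-k+1}$ meeting along a circle). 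You instead localize each operation in a closed annular neighborhood $N$ of its loop(s): since every operation identifies only points lying on the loops themselves, it fixes $\overline{S\setminus N}$ and $\partial N$ pointwise, so two applications of inclusion--exclusion give $\Delta\mathcal{X}=\mathcal{X}(N')-\mathcal{X}(N)=\mathcal{X}(N')$, and your three local computations are all valid. In particular, the zipping retraction you flag as the main obstacle does go through: the fiberwise deformation retraction of the annulus $\alpha\times[-1,1]$ onto its core only moves points within fibers and is the identity on the core, while the zipping identification involves only core points, so the retraction descends to the quotient and $N'$ retracts onto the arc $d$, giving $\mathcal{X}(N')=1$; the reduction step you worry about is likewise sound, precisely because the identifications never touch $\overline{S\setminus N}$ or $\partial N$, regardless of whether the two loops of an identification lie on one surface or two, cobordant or not. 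What your approach buys is uniformity and economy: no case analysis on loop type at all, and one computation per operation type covering every global configuration --- it even makes transparent \emph{why} the answer cannot depend on whether a loop is separating, handle, or tunnel. What the paper's approach buys is structural information that a purely local count does not: it records which Betti numbers change and how, and it gives explicit global descriptions of the quotient surfaces, which is the kind of data relevant to the genus considerations (Lemma~\ref{lem:genus}) and to the open questions on Betti numbers raised at the end of the paper.
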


The proof of Theorem \ref{teo:forte} will follow from a series of lemmas, each of which will prove the effect on the Euler characteristic of $\sqcup_{i=1}^{n}{M_{g_{i}}}$ after performing a specific type of  operations, i.e. collapsing, zipping or double loop identification, on a collection of loops $\mathcal{L}(\sqcup_{i=1}^{n}{M_{g_{i}}})$.

\newtheorem{Lem}{Lemma}

\begin{Lem}[\textcolor{black}{Collapsing and  Zipping}]\label{lem:EulerCone}
Let $S(M_{g})$ be a singularized surface originating from $M_{g}$ by performing   \textcolor{black}{$C$ collapsings and $Z$ zippings. Then } $$\mathcal{X}(S(M_{g})) = 2 - 2g + C + Z.$$
\end{Lem}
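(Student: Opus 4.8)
The plan is to reduce the statement to the effect of a single operation and then induct. Since the loops in $\mathcal{L}(M_g)$ are pairwise disjoint and each simple loop operation is supported in an arbitrarily thin tubular neighborhood of its loop, I can choose pairwise disjoint closed annular neighborhoods and perform the operations one at a time. I will show that each individual collapsing or zipping raises the Euler characteristic by exactly $1$; after $C$ collapsings and $Z$ zippings the cumulative change is then $C+Z$, which combined with the base value $\mathcal{X}(M_g)=2-2g$ recorded in the Preliminaries yields $\mathcal{X}(S(M_g))=2-2g+C+Z$. The induction is on the number $C+Z$ of operations, the base case $C=Z=0$ being $S(M_g)=M_g$.

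For the inductive step I would isolate the operation using the inclusion--exclusion principle. Let $\alpha$ be the loop being operated on and $N\cong S^1\times[-1,1]$ a closed annular neighborhood of $\alpha$ disjoint from every previously singularized region; write $B=M_g\setminus\mathrm{int}(N)$ for the unaltered complementary piece and $A$ for the image of $N$ after the operation. Because both collapsing and zipping are supported on the core $S^1\times\{0\}$ and fix $\partial N$ pointwise, we have $A\cap B=N\cap B=\partial N$, a disjoint union of two circles with $\mathcal{X}(\partial N)=0$. Applying inclusion--exclusion to the two decompositions $M_g=N\cup B$ and $S(M_g)=A\cup B$ and subtracting, the $B$ and boundary terms cancel and one is left with
$$\mathcal{X}(S(M_g))-\mathcal{X}(M_g)=\mathcal{X}(A)-\mathcal{X}(N)=\mathcal{X}(A)-0=\mathcal{X}(A).$$
Thus the whole problem reduces to showing $\mathcal{X}(A)=1$ for the modified neighborhood $A$ in each of the two cases.

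For a collapsing, $A$ is the annulus with its core circle crushed to the apex $p$, that is, two cones joined at $p$. This space is contractible, so by homotopy invariance $\mathcal{X}(A)=1$, and the step contributes $+1$.

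The delicate case, and the one I expect to be the main obstacle, is the zipping, where $A$ is the annulus with its core circle $\alpha$ folded two-to-one onto the arc $d$. Here $A$ is not obviously contractible, since it retains the two boundary circles of $N$, so I would compute $\mathcal{X}(A)$ from a CW model adapted to the fold: place the two endpoints $P_1,P_2$ of $d$ as $0$-cells, so that $\alpha$ is built from two arcs $\alpha_1,\alpha_2$ that the fold identifies while fixing $P_1,P_2$, and then track how the relation $\alpha_1\sim\alpha_2$ merges cells. Splitting $A=A^{+}\cup A^{-}$ along $d$ into the images of the two half-annuli $S^1\times[0,1]$ and $S^1\times[-1,0]$, an explicit cell count for each half gives $V-E+F=4-5+2=1$, while $A^{+}\cap A^{-}=d$ is an arc with $\mathcal{X}(d)=1$; inclusion--exclusion then yields $\mathcal{X}(A)=1+1-1=1$. (The same value also follows from the additivity of the compactly supported Euler characteristic applied to the partition $A=(N\setminus\alpha)\sqcup d$, since the open annular pieces contribute $0$ and $d$ contributes $1$.) With $\mathcal{X}(A)=1$ established for both operations, each step adds exactly $1$, and iterating over the disjoint loops closes the induction.
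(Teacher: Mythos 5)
Your proof is correct, but it follows a genuinely different route from the paper's. The paper argues globally and homologically: for a collapsing it performs a case analysis on the type of loop being collapsed (a separating loop increases $\beta_{2}$ by one; tunnel and handle loops decrease $\beta_{1}$ by one, with a separate discussion of families of mutually cobordant handle loops, where one collapse lowers $\beta_{1}$ while another raises $\beta_{2}$), concludes that each collapse raises $\mathcal{X}$ by one, and then disposes of zipping by observing that the zipped surface is homotopy equivalent to the collapsed one (the homotopy contracts the fold arc $d$ to a point), so zipping inherits the same $+1$ effect. You instead argue locally, by excision: isolating the operated loop in a closed annular neighborhood $N$, applying inclusion--exclusion to the two decompositions $M_{g}=N\cup B$ and $S(M_{g})=A\cup B$ with $A\cap B = N\cap B = \partial N$, and concluding that the change in $\mathcal{X}$ is exactly $\mathcal{X}(A)-\mathcal{X}(N)=\mathcal{X}(A)$, which you then compute to be $1$ in both cases (contractibility of the two cones wedged at the apex for a collapse, and a CW count $4-5+2=1$ per half plus inclusion--exclusion along $d$ for a zip). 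Your approach buys uniformity and robustness: no case analysis on separating, tunnel, or handle loops is needed, the delicate cobordant-handle-loop situation that the paper must treat specially never arises, zipping is handled on an equal footing with collapsing rather than by reduction to it, and the disjointness hypothesis enters transparently since the induction only needs pairwise disjoint annular neighborhoods. What the paper's approach buys in exchange is finer information: it records how each individual Betti number changes as a function of the loop type, which is relevant to the question about Betti numbers of singularized surfaces raised at the end of the article, and the homotopy equivalence between the zipped and collapsed surfaces is a statement of independent interest that your purely local computation does not yield.
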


\begin{proof}

The idea behind the proof is to show that by performing a collapsing operation, as well as, a zipping operation on a loop in $\mathcal{L}(M_{g})$ the effect on the Euler characteristic, $\mathcal{X}(M_{g})$, will be an increase by one.

\begin{itemize}
     \item[a) ] First, note that collapsing a loop $\alpha \in \mathcal{L}(M_{g})$, where $\alpha$ is:  
\begin{itemize}
    \item[i) ] a \textbf{separating simple loop}, increases $\beta_{2}(M_{g})$ by one;
    \item[ii) ] a \textbf{tunnel loop}, decreases $\beta_{1}(M_{g})$ by one;
    \item[iii) ] a \textbf{handle loop}, decreases $\beta_{1}(M_{g})$ by one, unless $\alpha$ is cobordant to another handle loop $\beta \in \mathcal{L}(M_{g})$ being collapsed. Recall that if $\alpha$ and $\beta$ are cobordant, then there is a subsurface $N_{k,2} \subset M_{g}$, for some $k \in \{1, \ldots, g \}$, such that $\partial(N_{k,2}) = \alpha \cup \beta$. Thus, collapsing $\alpha$ and $\beta$ transforms $N_{k,2}$ in a closed subsurface, so that one of them decreases $\beta_{1}(M_{g})$ by one, and the other increases $\beta_{2}(M_{g})$ by one. Something similar occurs if $n$ cobordant handle loops in $\mathcal{L}(M_{g})$ are collapsed.

\end{itemize}

Since the Euler characteristic of $M_{g}$ is given by the alternating sum: $$\mathcal{X}(M_{g}) = \beta_{0}(M_{g}) - \beta_{1}(M_{g}) + \beta_{2}(M_{g}),$$ the net effect of decreasing  $\beta_{1}(M_{g})$ by one as well as increasing $\beta_{2}(M_{g})$ by one is the increase of  $\mathcal{X}(M_{g})$ by one. So, if $S(M_{g})$ is obtained by $C$ collapsing operations, it follows that: $$\mathcal{X}(S(M_{g})) \ \ = \ \ \mathcal{X}(M_{g}) + C \ \ = \ \ 2 - 2g + C;$$

\item[b) ] Now, note that zipping a loop $\alpha \in \mathcal{L}(M_{g})$ is homotopically equivalent to collapsing $\alpha$. The homotopy simply contracts a segment to a point. In other words, the following diagram is commutative:

\begin{equation}\label{diagrama}
  \begin{tikzcd}
    M_{g} \arrow{rr}{Z \ zippings} \arrow[swap]{rrdd}{Z \ collapsings} & & S(M_{g}) \arrow{dd}{Homotopy} \\
    & & \\
     & & \widetilde{S}(M_{g})
  \end{tikzcd}
\end{equation}

Consequently, by item a) and Euler characteristic's invariance under homotopy, if $S(M_{g})$ originates from $M_{g}$ by performing $Z$ zippings, we have that $\mathcal{X}(S(M_{g})) = 2 - 2g + Z$.

\item[c) ] Finally, once the loops in $\mathcal{L}(M_{g})$ are all disjoint, if a total of $C$ collapsings and $Z$ zippings are performed in the singularization of $M_{g}$, it follows from a) and b) that $\mathcal{X}(S(M_{g})) = 2 - 2g + C + Z$.

\end{itemize}

\end{proof}

\newtheorem{Cor}{Corolário}

\begin{figure}[h]
\begin{equation*}
  \begin{tikzcd}
   \includegraphics[scale=0.4]{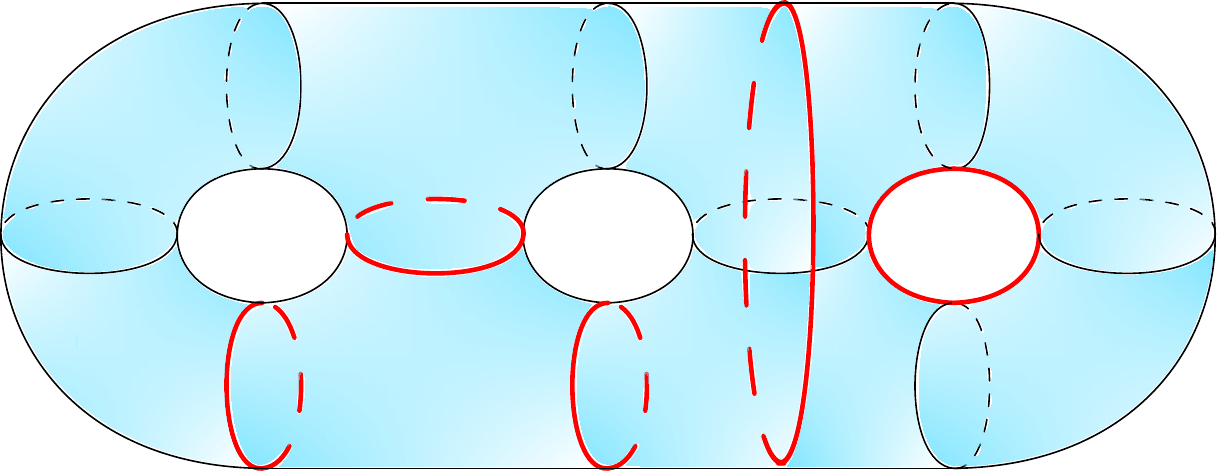} \arrow{rr}{Zipping} \arrow[swap]{rrdd}{Collapsing} & & \includegraphics[scale=0.4]{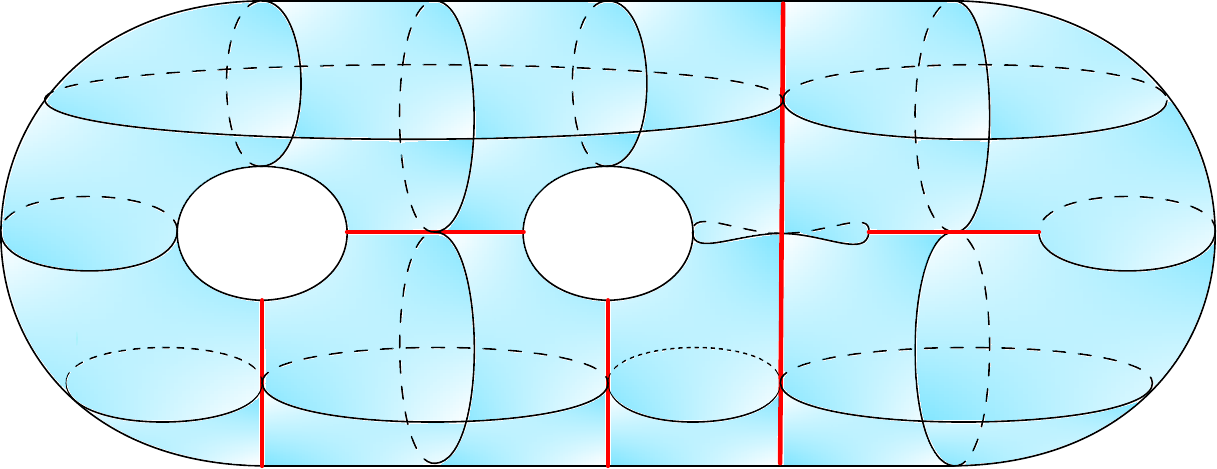} \arrow{dd}{Homotopy} \\
    & & \\
     & & \includegraphics[scale=0.4]{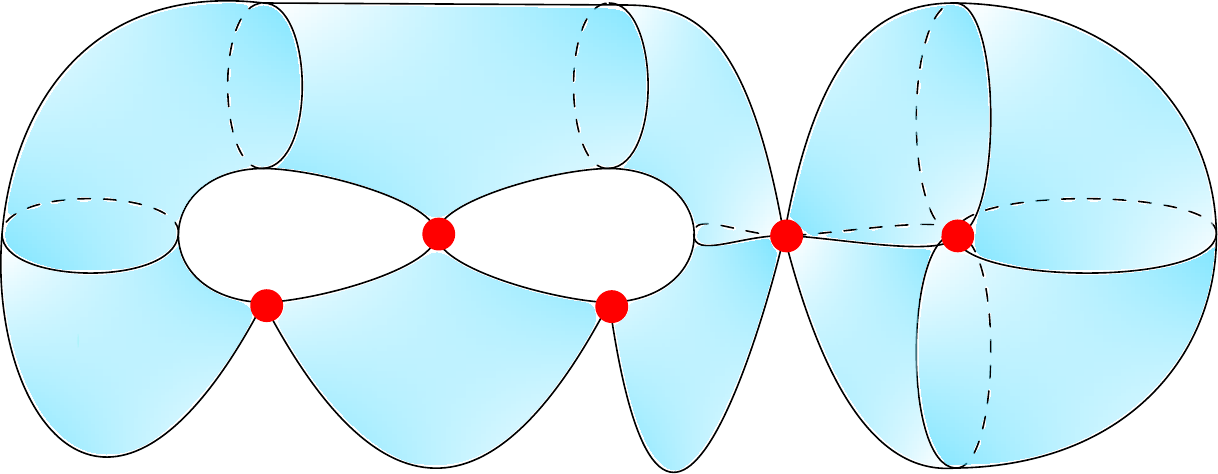}
  \end{tikzcd}
 \end{equation*} 
\caption{Singularization via Collapsing and Zipping Operations}
\label{diagrama2}
\end{figure}

In Figure \ref{diagrama2}, a singularization via zipping and collapsing operations of a genus $g=3$ surface, a $3$-torus, is presented. Note that the surfaces are homotopy equivalent. Hence the Euler characteristic for both singularized surfaces are the same, that is,  $\mathcal{X}(\widetilde{S}(M_{3})) = 2 - 2\times3 + 5=1$.

It is easy to see that Lemma~\ref{lem:EulerCone} generalizes whenever the singular surface is obtained from a collection $\{M_{g_{i}} \ | \ i = 1, \ldots, n\}$ of smooth surfaces by performing $C$  collapsing operations and $Z$  zipping operations on $\mathcal{L}(\sqcup_{i = 1}^{n}{M_{g_{i}}})$. In this case, $$\mathcal{X}(S(\sqcup_{i = 1}^{n}{M_{g_{i}}})) \ = \ 2n - 2G + C + Z,$$ where $G = \displaystyle\sum_{i=1}^{n}{g_{i}}$.

The next couple  of lemmas, Lemmas \ref{lem:duplo} and \ref{lem:duplo2},  will prove that double loop identification on $\mathcal{L}(\sqcup_{i = 1}^{n}{M_{g_{i}}})$ does not  change  the Euler characteristic given by $\sum_{i = 1}^{n}{\mathcal{X}(M_{g_{i}})}$.

\begin{Lem}[Double loop identification]\label{lem:duplo}
Let  $S(\sqcup_{i = 1}^{n}{M_{g_{i}}})$ be a singular surface obtained from double loop identifications on $\mathcal{L}(\sqcup_{i = 1}^{n}{M_{g_{i}}})$ where each double loop identification is performed on a pair of loops lying  on different surface components.  Then the Euler characteristic of $S(\sqcup_{i = 1}^{n}{M_{g_{i}}})$ remains the same, that is,  $$\mathcal{X}(S(\sqcup_{i = 1}^{n}{M_{g_{i}}})) \ = \ \sum_{i = 1}^{n}{\mathcal{X}(M_{g_{i}})}  
\ = \ 2n - 2G.$$
\end{Lem}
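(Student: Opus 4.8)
The plan is to reduce the whole computation to the single elementary fact that $\mathcal{X}(S^{1}) = 0$, and then to feed this into the inclusion–exclusion principle (property ii)). I would first pin down the effect of one double loop identification between two components and then induct on the total number of identifications performed.

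For a single identification, consider gluing $\alpha \subset M_{g_a}$ to $\beta \subset M_{g_b}$ (with $a \neq b$) via the orientation-preserving homeomorphism $h$, and let $X = (M_{g_a} \sqcup M_{g_b})/(\alpha \sim \beta)$. Let $A$ and $B$ be the images of $M_{g_a}$ and $M_{g_b}$ in $X$. Since the gluing identifies only points of $\alpha$ with points of $\beta$, and never identifies two distinct points of the same surface, the quotient map restricts to an embedding on each surface; hence $A \cong M_{g_a}$ and $B \cong M_{g_b}$, while $A \cup B = X$ and $A \cap B$ is exactly the single identified circle, homeomorphic to $S^{1}$. Inclusion–exclusion then yields
$$\mathcal{X}(X) = \mathcal{X}(A) + \mathcal{X}(B) - \mathcal{X}(A \cap B) = \mathcal{X}(M_{g_a}) + \mathcal{X}(M_{g_b}) - \mathcal{X}(S^{1}) = \mathcal{X}(M_{g_a}) + \mathcal{X}(M_{g_b}),$$
because $\mathcal{X}(S^{1}) = \beta_{0} - \beta_{1} = 1 - 1 = 0$. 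Thus one identification between two components leaves the total Euler characteristic unchanged.

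For several identifications I would induct on their number $m$. The base case $m = 0$ is just additivity of $\mathcal{X}$ over the disjoint union $\sqcup_{i=1}^{n} M_{g_i}$, giving $\sum_{i} \mathcal{X}(M_{g_i}) = 2n - 2G$. For the inductive step, let $Y$ be the space produced by the first $m$ identifications and form $Y'$ by identifying the next pair of loops, two disjoint circles $\alpha, \beta$ in $Y$. Taking the two closed pieces to be the image of the surface carrying $\alpha$ and the image of the surface carrying $\beta$, their intersection is again a single circle $\cong S^{1}$, so inclusion–exclusion gives $\mathcal{X}(Y') = \mathcal{X}(Y)$ exactly as before. The disjointness of the loops in $\mathcal{L}(\sqcup_{i} M_{g_i})$ guarantees that the identified circles remain disjoint in every intermediate space, so each step contributes $-\mathcal{X}(S^{1}) = 0$.

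The main obstacle I anticipate is the bookkeeping in the inductive step once earlier identifications have merged several original surfaces into a single connected piece: then the two loops being glued may lie in the same component of $Y$, and one must check that inclusion–exclusion still applies. I would resolve this by phrasing the identification of two disjoint circles in any space $Y$ as a pushout along $S^{1} \sqcup S^{1} \to S^{1}$, for which $\mathcal{X}(Y') = \mathcal{X}(Y) + \mathcal{X}(S^{1}) - \mathcal{X}(S^{1} \sqcup S^{1}) = \mathcal{X}(Y)$; equivalently, endow each surface with a CW structure making every loop a subcomplex of $v$ vertices and $v$ edges, whereupon each identification deletes $v$ vertices and $v$ edges and hence leaves $V - E + F$ invariant. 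Either formulation confirms $\Delta\mathcal{X} = 0$ at every step, so by induction $\mathcal{X}(S(\sqcup_{i=1}^{n} M_{g_i})) = \sum_{i=1}^{n} \mathcal{X}(M_{g_i}) = 2n - 2G$.
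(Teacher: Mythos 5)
Your proof is correct, and it runs on the same engine as the paper's proof---the inclusion--exclusion property together with $\mathcal{X}(S^{1})=0$---but the architecture is genuinely different. The paper makes a single global decomposition: it views $S(\sqcup_{i=1}^{n}M_{g_{i}})$ as the union of the $n$ smooth surfaces meeting along the $D$ identified circles and writes $\mathcal{X}(S(\sqcup_{i=1}^{n}M_{g_{i}}))=\sum_{i=1}^{n}\mathcal{X}(M_{g_{i}})-\sum_{j=1}^{D}\mathcal{X}(S^{1})$ in one stroke, which implicitly invokes a many-set inclusion--exclusion in which all higher-order intersection terms vanish. You instead induct on the number of identifications, and you correctly identified the one place where the naive two-set argument breaks in the inductive step: after earlier gluings, the images of the two original surfaces need not cover the intermediate space nor intersect in a single circle. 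Your repair---formulating one identification as the pushout $Y\cup_{S^{1}\sqcup S^{1}}S^{1}$, so that $\mathcal{X}(Y')=\mathcal{X}(Y)+\mathcal{X}(S^{1})-\mathcal{X}(S^{1}\sqcup S^{1})=\mathcal{X}(Y)$, or equivalently the cellular count in which each identification deletes $v$ vertices and $v$ edges---is airtight and is what makes the induction go through. This buys you something the paper's one-line computation does not: the pushout step nowhere uses the hypothesis that the two loops lie on different surface components, so your argument simultaneously proves the paper's Lemma~\ref{lem:duplo2} (identification of two disjoint loops on the same surface), which the paper handles separately via a cobordant/non-cobordant case analysis. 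The only blemish is the first sentence of your inductive step (the ``two closed pieces'' whose union is not all of $Y'$ and whose intersection may consist of several circles), but since you flag and replace that formulation yourself, the final argument stands.
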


\begin{proof}

Since the singular surface $S(\sqcup_{i = 1}^{n}{M_{g_{i}}})$ is a union of smooth surfaces $\sqcup_{i = 1}^{n}{M_{g_{i}}}$, intersecting along loops in $\mathcal{L}(\sqcup_{i = 1}^{n}{M_{g_{i}}})$, it follows from the inclusion-exclusion principle that: 
\begin{equation}
   \mathcal{X}(S(\sqcup_{i = 1}^{n}{M_{g_{i}}})) \ = \ \sum_{i = 1}^{n}{\mathcal{X}(M_{g_{i}})} - \sum_{i = 1}^{D}{\mathcal{X}(S^{1})}, 
\end{equation}

where $D$ is the number of double loop identifications. Since $\mathcal{X}(S^{1}) = 0$, the Euler characteristic of $S(\sqcup_{i = 1}^{n}{M_{g_{i}}})$ does not depend on the number of double loop identifications performed and is given by:  $$\mathcal{X}(S(\sqcup_{i = 1}^{n}{M_{g_{i}}})) \ = \ \sum_{i = 1}^{n}{\mathcal{X}(M_{g_{i}})}  
\ = \sum_{i = 1}^{n}{2 - 2g_{i}} \ = \ 2n - 2G.$$
\end{proof}

\paragraph{Example} In Figure~\ref{fig:tangled}, an example of singularization via double loop identifications on disjoint surfaces is presented. The collection $\{ M_{2}^1, M_{2}^2, M_{1}\}$ contains two bi-tori $M_{2}^1$ and $M_{2}^2$ and a torus $M_{1}$, as well as, a family of loops identifications $\mathcal{L}$. The  singular surface $S(\sqcup_{i = 1}^{3}{M_{g_{i}}})$, according to Lemma~ \ref{lem:duplo}, has Euler characteristic equal to: $$\mathcal{X}(S(\sqcup_{i = 1}^{3}{M_{g_{i}}})) \ = \ 2n - 2G \ = \ 2\times3 - 2\times5 \ = \ -4.$$

\begin{figure}[ht]
    \begin{equation*}
            \begin{tikzcd}
                \includegraphics[scale=0.4]{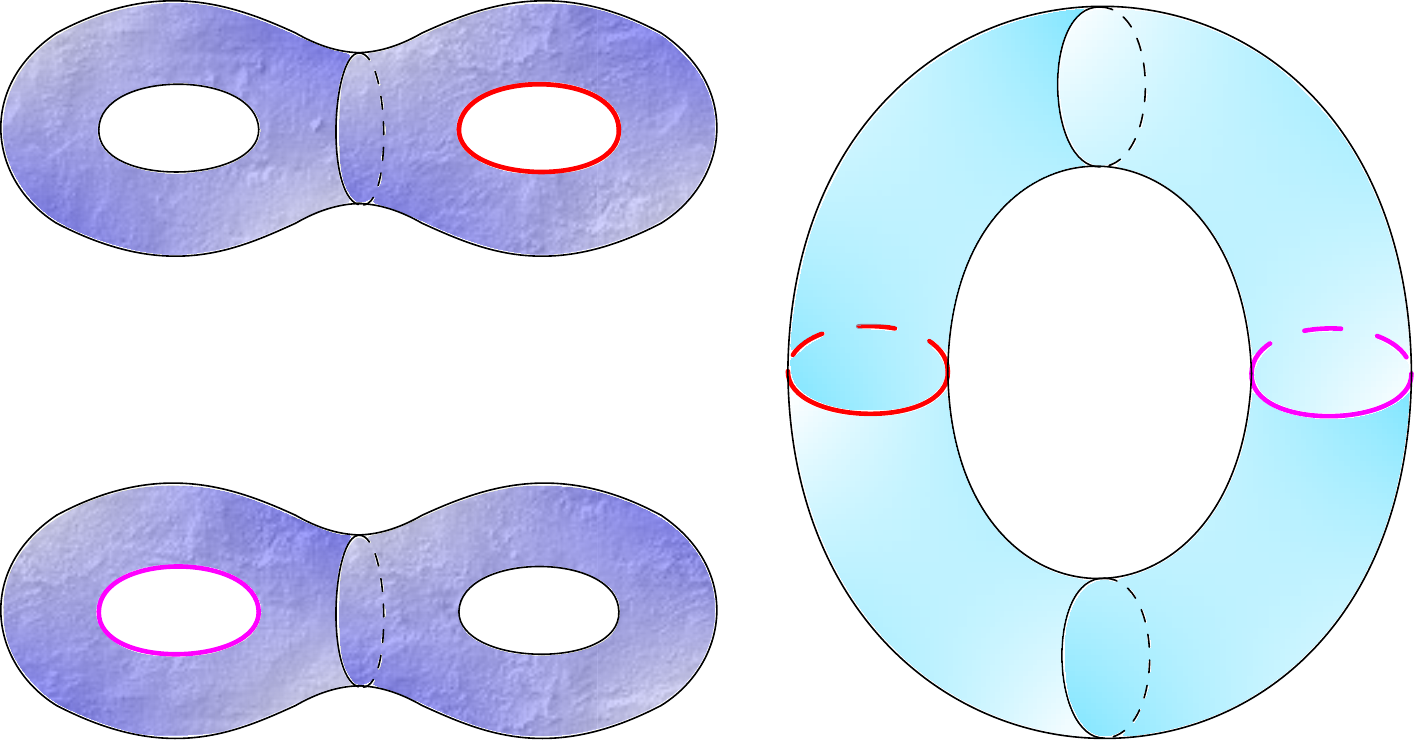} \xrightarrow[identification]{Double \ loop}  \includegraphics[scale=0.4]{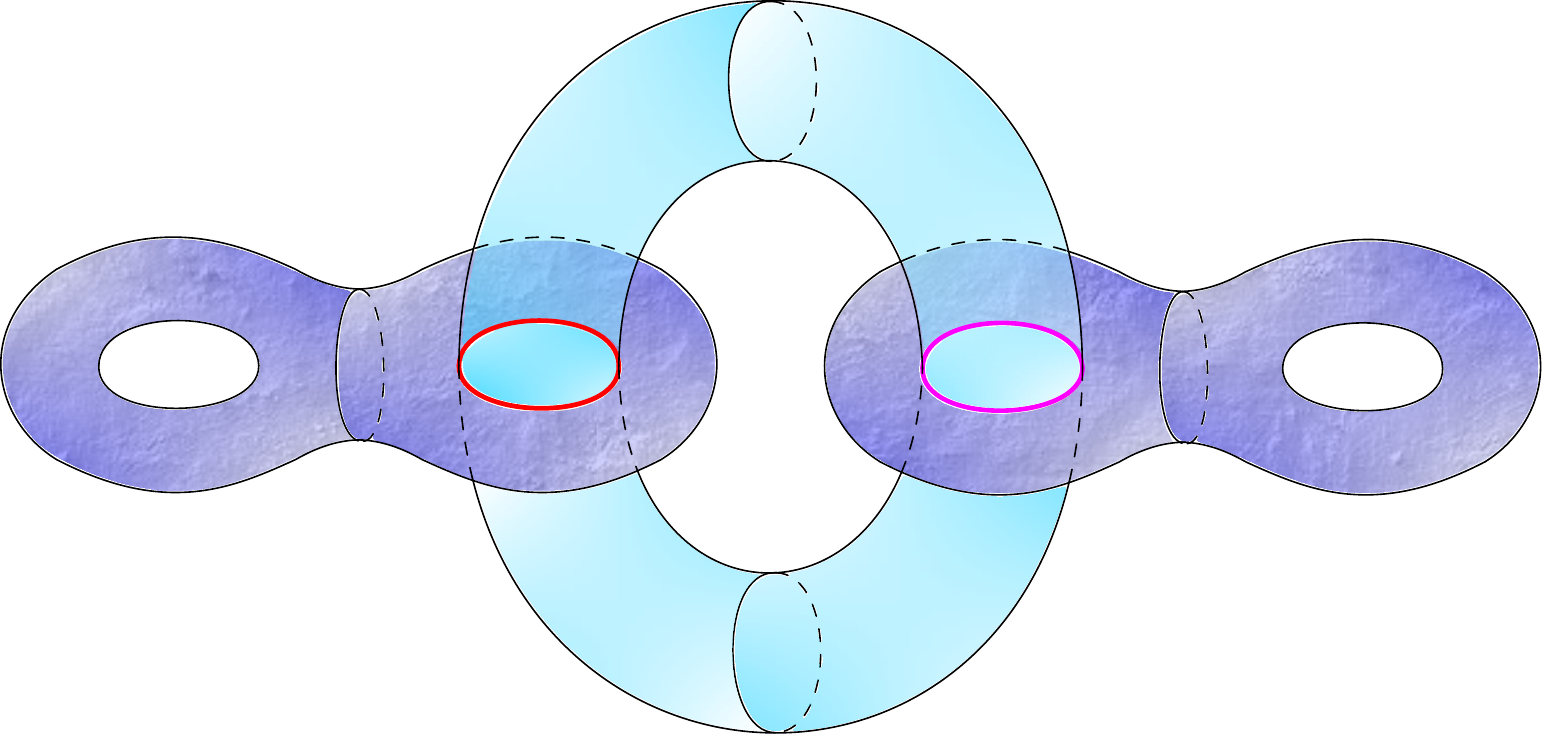} 
            \end{tikzcd}
        \end{equation*}
		\caption{Surface chain}
		\label{fig:tangled}
\end{figure}

The following lemma will prove that the invariance of the Euler characteristic under  double loop identifications still holds when the loops are chosen on the same smooth surface.

\begin{Lem}[Double loop identification]\label{lem:duplo2}
Let $\alpha$, $\beta: S^{1} \rightarrow M_{g}$ be two disjoint simple loops and $S(M_{g})$  the singular surface obtained by a double loop identification of $\alpha$ and $\beta$. Then $$\mathcal{X}(S(M_{g})) = \mathcal{X}(M_{g})$$
\end{Lem}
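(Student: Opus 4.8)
The plan is to mirror the strategy of Lemma~\ref{lem:duplo}, combining the inclusion--exclusion principle with homotopy invariance, but with a decomposition adapted to the fact that both loops now lie on a single surface. The obstruction to reusing the earlier argument verbatim is precisely this: $S(M_{g})$ can no longer be written as a union of two whole closed surfaces meeting along the identified circle, since the two sheets crossing along the singular circle both belong to $M_{g}$. I would resolve this by confining the identification to tubular neighborhoods of the loops, so that inclusion--exclusion can still be applied to a genuine decomposition into two closed pieces.

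Concretely, I would first choose disjoint closed annular neighborhoods $N_{\alpha}$ and $N_{\beta}$ of $\alpha$ and $\beta$ (possible since the loops are disjoint and simple), having $\alpha$ and $\beta$ as their core circles. Set $A = N_{\alpha} \cup N_{\beta}$ and let $B = \overline{M_{g} \setminus A}$ be the closure of the complement, a compact subsurface with four boundary circles. Then $M_{g} = A \cup B$ with $A \cap B$ equal to those four circles, so $\mathcal{X}(A \cap B) = 0$; since $A$ is a disjoint union of two annuli, $\mathcal{X}(A) = 0$, and inclusion--exclusion gives $\mathcal{X}(M_{g}) = \mathcal{X}(B)$.

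Next I would observe that the double loop identification of $\alpha$ and $\beta$ via $h$ takes place entirely on the core circles, that is, in the interior of $A$, leaving $B$ and the four boundary circles untouched. Hence $S(M_{g}) = \widetilde{A} \cup B$, where $\widetilde{A}$ is obtained from the two annuli by gluing them along their common core circle, and $\widetilde{A} \cap B$ is still the same four circles, so $\mathcal{X}(\widetilde{A} \cap B) = 0$. Applying inclusion--exclusion once more yields $\mathcal{X}(S(M_{g})) = \mathcal{X}(\widetilde{A}) + \mathcal{X}(B)$.

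The crux of the argument, and the step I expect to require the most care, is the computation $\mathcal{X}(\widetilde{A}) = 0$, since $\widetilde{A}$ is where the non-manifold singularity actually lives. Here I would invoke homotopy invariance: each annulus deformation retracts onto its core circle, and because the two cores are identified via $h$ into a single circle $\gamma$, the retractions agree on the glued locus and combine into a deformation retraction of $\widetilde{A}$ onto $\gamma \cong S^{1}$. Thus $\mathcal{X}(\widetilde{A}) = \mathcal{X}(S^{1}) = 0$, and comparing the two displays gives $\mathcal{X}(S(M_{g})) = \mathcal{X}(B) = \mathcal{X}(M_{g})$, as claimed. Equivalently, one could run the entire computation combinatorially, choosing a cellular structure in which $h$ is cellular: the identification of $\beta$ with $\alpha$ removes equally many vertices and edges and no faces, leaving $V - E + F$ unchanged, which confirms the same conclusion.
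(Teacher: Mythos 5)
Your proof is correct, but it takes a genuinely different route from the paper's. The paper's proof of Lemma~\ref{lem:duplo2} splits into cases according to whether $\alpha$ and $\beta$ are cobordant: in the non-cobordant case it replaces $S(M_{g})$, up to homotopy, by the adjunction space obtained by gluing a cylinder $C_{\alpha}^{\beta}$ to $M_{g}$ along its two end circles, and applies inclusion--exclusion to the pieces $M_{g}$ and $C_{\alpha}^{\beta}$ meeting in two circles; in the cobordant case it cuts $M_{g}$ along the loops into the cobordism $N$ and its complement $N^{c}$, identifies the quotients as closed surfaces $M_{k}$ and $M_{g-k+1}$ meeting in a circle, and does the genus bookkeeping explicitly. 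Your argument instead localizes the identification: by decomposing $M_{g}$ into two annular neighborhoods and their complement $B$, you never need to know the global topology of the quotient, only that the glued pair of annuli $\widetilde{A}$ deformation retracts onto the identified core circle. This buys uniformity: there is no case distinction (the paper itself remarks that the distinction is ``technically not needed,'' yet still carries it out), and the only homotopy-theoretic input is the elementary retraction of $\widetilde{A}$ onto $S^{1}$, rather than the global claim that the quotient $M_{g}/\alpha \sim \beta$ is homotopy equivalent to $M_{g}$ with a cylinder attached, which deserves its own justification. What the paper's cobordant case buys in exchange is a more explicit topological description of the resulting singular surface (two closed surfaces of genus $k$ and $g-k+1$ crossing along a circle), which is information beyond the Euler characteristic itself. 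Your closing cellular remark (choosing a cell structure in which $h$ is cellular, so that the identification removes equal numbers of vertices and edges and no faces) is also sound and is arguably the most elementary verification of all; both your main argument and the paper's rest on the same two pillars, inclusion--exclusion and homotopy invariance, applied to different decompositions.
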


\begin{proof}

\textcolor{black}{Either the two loops  $\alpha$ and $\beta$ are cobordant, or they are not. We consider both cases.}
 Technically, this distinction is not needed, however the cobordant case presents a more topological description of the singular surface produced by the operation. 
\begin{itemize}
    \item[i) ] \textbf{$\pmb{\alpha}$ and $\pmb{\beta}$ are non-cobordant}:

\textcolor{black}{The quotient space $M_{g}/\alpha \sim \beta$ given by a double loop identification of $\alpha$ and $\beta$ is homotopically equivalent to gluing a cylinder $C_{\alpha}^{\beta}$ on $M_{g}$, where $\alpha$ and $\beta$ are  each glued to an end circle of $C_{\alpha}^{\beta}$. The homotopy is the contraction of the cylinder to a circle.\\
Hence it follows from the homotopy invariance of the Euler characteristic and the inclusion-exclusion principle that: $$\mathcal{X}(S(M_{g})) \ \ = \ \ \mathcal{X}(M_{g}) + \mathcal{X}(C_{\alpha}^{\beta}) - \mathcal{X}(S^{1} \cup S^{1}) \ \ = \ \ \mathcal{X}(M_{g})$$}

\begin{figure}[!h] 
\centering

  \begin{tikzcd} \hspace{0.2cm}
  \begin{overpic}[align=c,scale=0.4]{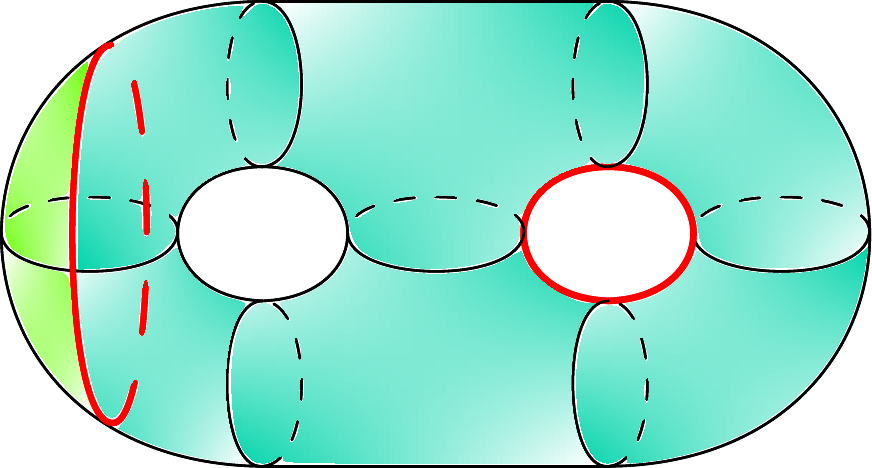}
  \end{overpic} \hspace{1cm}
    \begin{overpic}[align=c,scale=0.4]{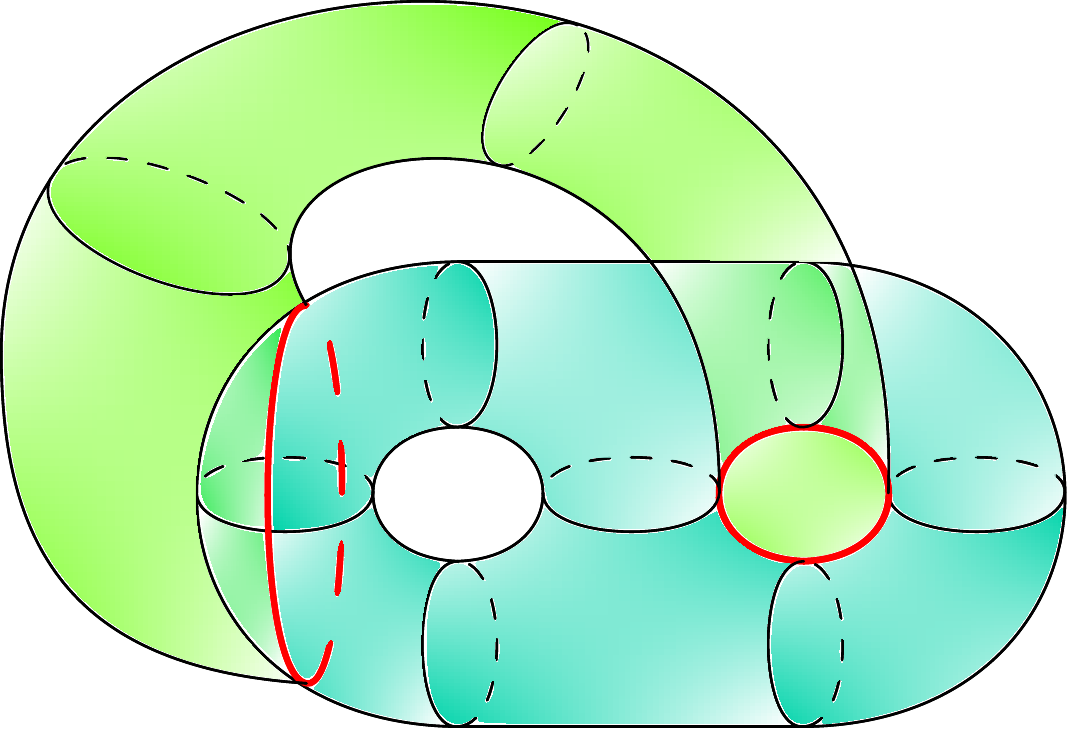}
    \end{overpic} \hspace{1cm}
    \begin{overpic}[align=c,scale=0.4]{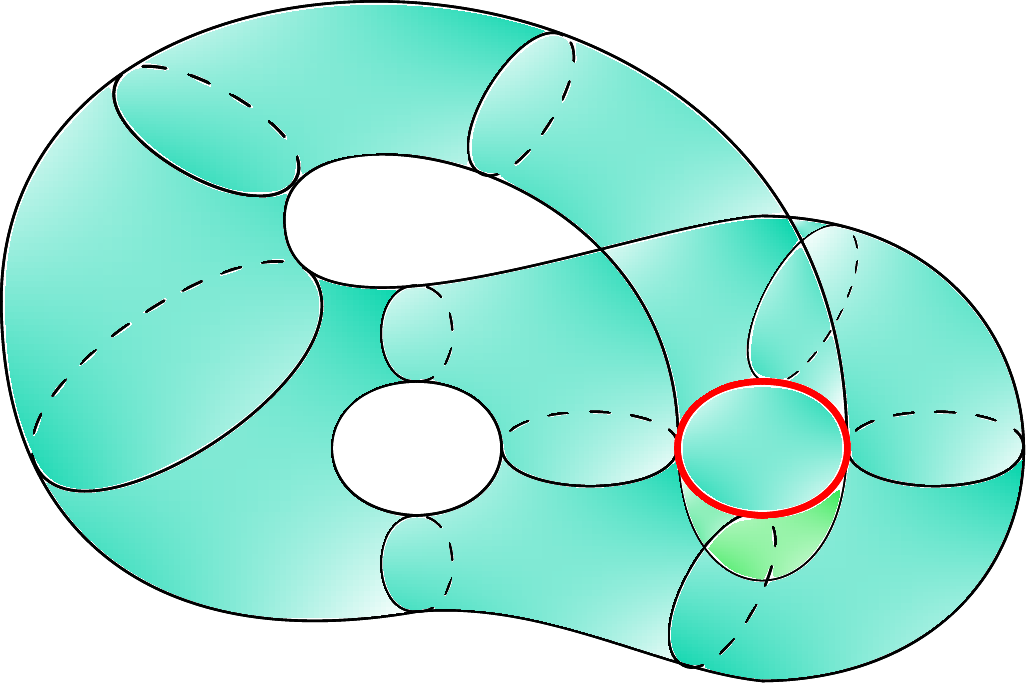}
    \end{overpic}
  \end{tikzcd}
  
\caption{Double loop identification of non-cobordant loops}
\end{figure}

    \item[ii) ] \textbf{$\pmb{\alpha}$ and $\pmb{\beta}$ are cobordant}:
    
Since $\alpha$ and $\beta$ are cobordant, there exists a connected subsurface  $N \subset M_{g}$ such that $\partial(N) = \alpha \cup \beta$. Hence  $M_{g} = N \cup N^{c}$, where $N^{c} \subset M_{g}$ is the \textcolor{black}{closure of}  \textcolor{black}{$M_g - N$}, with $\partial(N^{c}) = \alpha \cup \beta$. Note that $N^{c}$ need not be connected and $N \cap N^{c} = \alpha \cup \beta$.
\textcolor{black}{Also note that the quotient spaces $ N / \alpha \sim \beta$ and $ N^c / \alpha \sim \beta$  are surfaces (since each curve is single-sided in $N$ and $N^c$).
}

Thus, a double loop identification of $\alpha$ and $\beta$ on $M_{g}$ is equivalent to considering the surfaces  $(N / \alpha \sim \beta)$ and $(N^{c} / \alpha \sim \beta)$ intersecting along $\alpha \sim \beta$. It follows that: $$(N / \alpha \sim \beta) = M_{k} \ \ \ \ \ \mbox{ and } \ \ \ \ \ (N^{c} / \alpha \sim \beta) = M_{g - k + 1},$$ for some $k \in \{1, \ldots, g + 1\}$, with $M_{k} \cap M_{g - k + 1}$ homeomorphic to $S^{1}$.

\begin{figure}[h!]
	\begin{minipage}[c]{0.45\linewidth}
		 \begin{equation*}
            \begin{tikzcd}
                \begin{overpic}[scale=0.4]{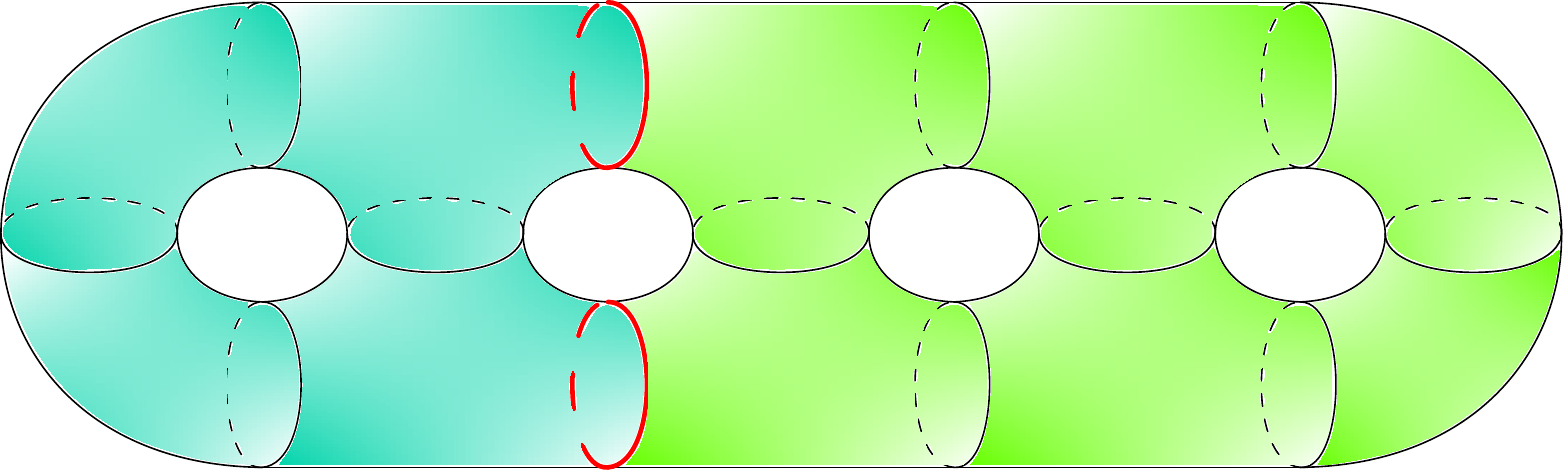}
		            \put(37,34){\large{$\alpha$}}
		            \put(37,-7){\large{$\beta$}}
		            \put(-3,29){\large{$N$}}
		            \put(96,29){\large{$N^{c}$}}
	            \end{overpic}  
            \end{tikzcd}
        \end{equation*}
	\end{minipage}
	\hfill
	\begin{minipage}[c]{0.05\linewidth}
	    $\simeq$
	\end{minipage}
	\hfill
	\begin{minipage}[c]{0.45\linewidth}
	\begin{equation*}
            \begin{tikzcd}
                \begin{overpic}[scale=0.4]{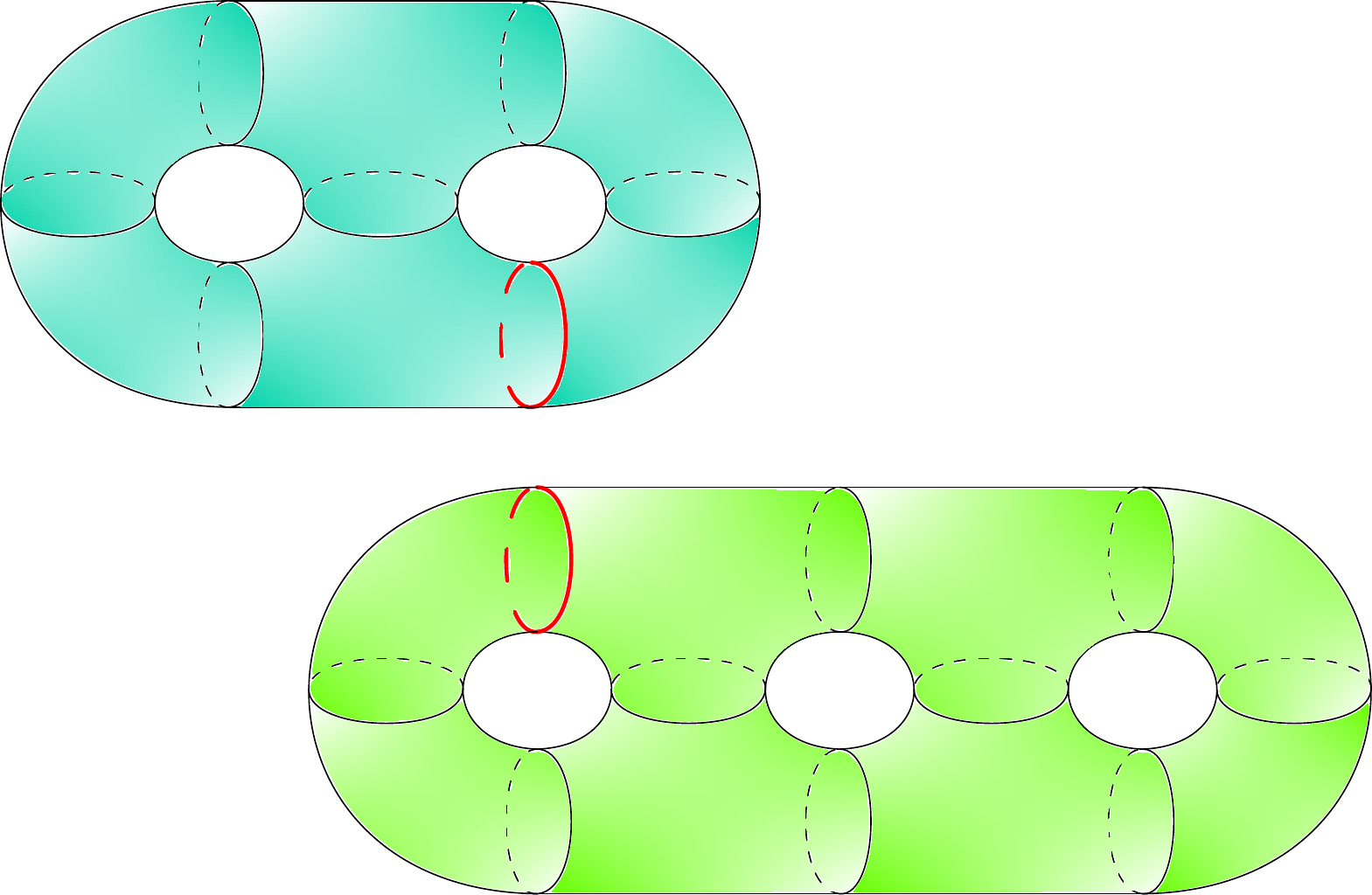}
		            \put(60,50){\normalsize{$M_{k} = \displaystyle\frac{N}{\alpha \sim \beta}$}}
		            \put(-24,12){\normalsize{$M_{g - k + 1} = \displaystyle\frac{N^{c}}{\alpha \sim \beta}$}}
	            \end{overpic}
            \end{tikzcd}
        \end{equation*}
	\end{minipage}
	\caption{Double loop identification of cobordant loops}
	\label{fig:nivaldo}
\end{figure}

Thus, the result follows from the inclusion-exclusion principle for the Euler characteristic: 
$$\mathcal{X}(S(M_{g})) \ = \ \mathcal{X}(M_{k}) + \mathcal{X}(M_{g - k + 1}) - \mathcal{X}(S^{1}) \ = \ (2 - 2k) \ + \ (2 - 2(g - k + 1) )\ = \ 2 - 2g \ = \ \mathcal{X}(M_{g})$$

\end{itemize}

\end{proof}

Now, by using Lemmas~\ref{lem:EulerCone}, \ref{lem:duplo} and \ref{lem:duplo2} the proof of Theorem~\ref{teo:forte} follows.
\begin{proof}[Theorem~\ref{teo:forte}]

By the invariance of the Euler characteristic for double loop identifications proven in Lemmas~\ref{lem:duplo} and \ref{lem:duplo2}, and by applying the inclusion-exclusion principle it follows that: $$ \mathcal{X}(S(\sqcup_{i = 1}^{n}{M_{g_{i}}})) \ = \ \sum_{i=1}^{n}{\mathcal{X}(S(M_{g_{i}}))},$$ where $S(M_{g_{i}})$ is the singular surface obtained from the collection of $M_{g_{i}}$ by performing the collapsing and zipping operations on the  loops in $\mathcal{L}(M_{g_{i}})$. 

According to Lemma~\ref{lem:EulerCone}, the equality holds: $$\mathcal{X}(S(M_{g_{i}})) \ = \ 2 - 2g_{i} + C_{i} + Z_{i},$$ where $C_{i}$ and $Z_{i}$ are the numbers of   loop collapsing operations and loop zipping  operations, respectively, that are performed on $M_{g_{i}}$.

Thus, by adding $\mathcal{X}(M_{g_{i}})$ for $i \in \{1, \ldots, n\}$, the proof follows: 

$$ \mathcal{X}(S(\sqcup_{i = 1}^{n}{M_{g_{i}}})) \ = \ \sum_{i=1}^{n}{\mathcal{X}(S(M_{g_{i}}))} \ = \ \sum_{i = 1}^{n}{2 - 2g_{i} + C_{i} + Z_{i}} \ = \ 2n - 2G + C + Z$$
\end{proof}


In Table~\ref{table-one}  we compute according to Theorem~\ref{teo:forte} the Euler characteristics of the singular surfaces presented as examples in this article.

\begin{table}[!htb]
        \centering
        \resizebox{0.7\linewidth}{!}{
        \begin{tabular}{ccccc}
        
            \cellcolor{gray!20} Singular surface & \cellcolor{gray!20} Smooth data & \cellcolor{gray!20} Collapsings & \cellcolor{gray!20} Zippings & \cellcolor{gray!20} Euler characteristic \\
            \cellcolor{gray!20} & \cellcolor{gray!20} $(n,G)$ & \cellcolor{gray!20} $(C)$ & \cellcolor{gray!20} $(Z)$ & \cellcolor{gray!20} $(\mathcal{X} = 2n - 2G + C + Z)$\\
            \hline
            & & & & \\
            \includegraphics[align=c,scale=0.16]{F8.png} & $(1,0)$ & $1$ & $0$ & $\mathcal{X} = 3$ \\
            & & & &\\
            \hline
            & & & & \\
            \includegraphics[align=c,scale=0.25]{HT2.png} & $(1,1)$ & $1$ & $0$ & $\mathcal{X} = 1$ \\
            & & & & \\
            \hline
            & & & & \\
            \includegraphics[align=c,scale=0.095]{torus_chain.png} & $(5,5)$ & $0$ & $0$ & $\mathcal{X} = 0$ \\
            & & & & \\
            \hline
             & & & & \\
            \hspace{0.6cm} \includegraphics[align=c,scale=0.2]{first2.png} \hspace{0.6cm} & $(1,1)$ & $0$ & $1$ & $\mathcal{X} = 1$ \\
            & & & & \\
            \hline
            & & & & \\
            \includegraphics[align=c,scale=0.25]{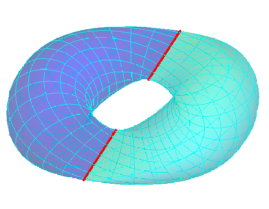} & $(1,1)$ & $0$ & $2$ & $\mathcal{X} = 2$ \\
            & & & & \\
            \hline
            & & & & \\
            \includegraphics[align=c,scale=0.25]{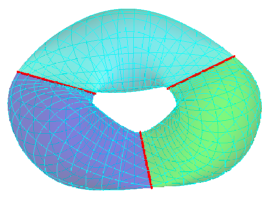} & $(1,1)$ & $0$ & $3$ & $\mathcal{X} = 3$ \\
            & & & & \\
            \hline
            & & & & \\
            \includegraphics[align=c,scale=0.25]{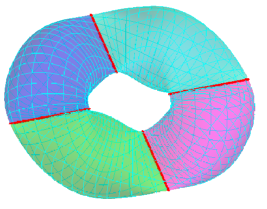} & $(1,1)$ & $0$ & $4$ & $\mathcal{X} = 4$ \\
            & & & & \\
            \end{tabular}}
            \caption{Computation of the Euler characteristic}
            \label{table-one}
    \end{table}

\paragraph{Example.} In Figure \ref{fig:smooth}, consider a collection of three spheres $\{M_0^1, M_0^2, M_0^3\} $ and on it a collection  of loops $$\mathcal{L}(\sqcup_{i=1}^{3}{M_0^i})=\{ (M_0^1,\ell_1,C), (M_0^1,\ell_2; M_0^2,\ell_3,D), (M_0^3,\ell_4,C), (M_0^3,\ell_5,C), (M_0^3,\ell_6,C)(M_0^3,\ell_7,C), (M_0^3,\ell_8,Z)\}. $$ After all loop operations are performed, the singularized manifold  $(S(\sqcup_{i = 1}^{3}{M_{g_{i}}}))$ has two connected components and the Euler characteristic on each connected component is:  $$\mathcal{X}(S(\sqcup_{i = 1}^{2}{M_0^i})) = 2\times2 - 2\times(0) + 1 + 0 = 5 $$ $$\mathcal{X}(S(M_{0}^3)) = 2\times1 - 2\times0 + 4 + 1 = 7$$

\begin{figure}[htb] 
\centering
\begin{equation*}
  \begin{tikzcd}
   \begin{overpic}[scale=0.6]{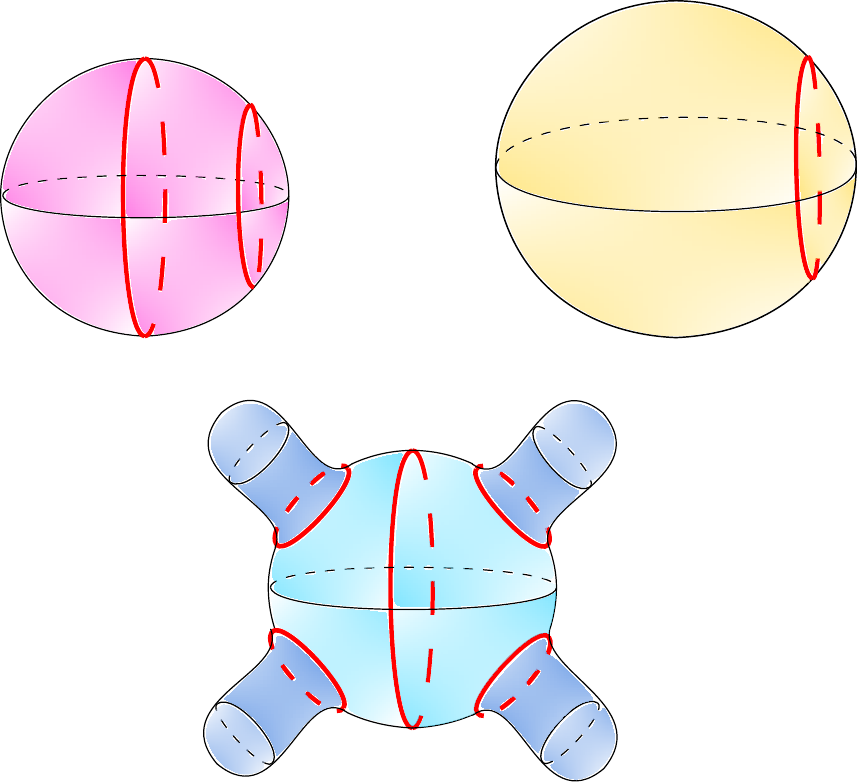}
		\put(15,88){$\ell_{1}$}
		\put(27.5,83){$\ell_{2}$}
		\put(92.5,89){$\ell_{3}$}
		\put(37,40){$\ell_{4}$}
		\put(54,40){$\ell_{5}$}
		\put(25,16){$\ell_{6}$}
		\put(66.5,16){$\ell_{7}$}
		\put(47,1){$\ell_{8}$}	
		\put(-20,68){$M_{0}^{1}$}
		\put(110,70){$M_{0}^{2}$}
		\put(0,20){$M_{0}^{3}$}
	\end{overpic} \arrow{rrr}{Singularization}  & & & \hspace{2cm} \begin{overpic}[scale=0.6]{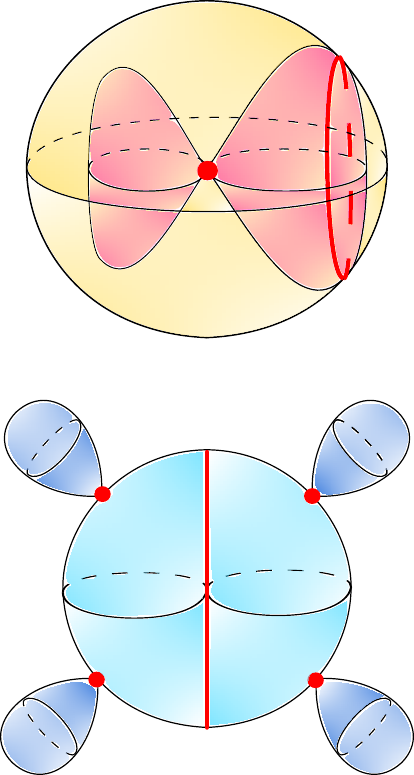} 
	    \put(55,77){$S(\sqcup_{i = 1}^{2}{M_0^i})$}
	    \put(62,22){$S(M_{0}^3)$}
	\end{overpic}
  \end{tikzcd}
\end{equation*}
\caption{Example of singularization of three spheres}
\label{fig:smooth}
\end{figure}

\begin{Def}
The \textbf{genus} $g^{S}$ of a   singularized connected surface  $S(\sqcup_{i=1}^{n}M_{g_{i}})$ is the maximal number of \textcolor{black}{disjoint} simple closed curves that can be removed from its  \textcolor{black}{nonsingular}  part without disconnecting $S(\sqcup_{i=1}^{n}M_{g_{i}})$.
\end{Def}

Note that the above definition generalizes the classical definition of genus for a smooth surface. Furthermore, the restriction on the removal of simple closed curves to the \textcolor{black}{nonsingular} part of the singularized surface, avoids the problem of infinitely many simple closed curves intersecting a one dimensional singular set  without disconnecting it.

The next lemma shows that the genus of a connected singularized surface $S(\sqcup_{i=1}^{n}M_{g_{i}})$ depends on the genus of the surfaces $M_{g_i}$ and the number of  double loop identifications performed. Moreover, it is invariant with  respect to    \textcolor{black}{collapsing and  zipping}.

\begin{Lem}\label{lem:genus}
Let $S(\sqcup_{i=1}^{n}M_{g_{i}})$ be a connected singularized surface obtained from the collection  $\{M_{g_{i}} \ | \ i = 1, \ldots n\}$ of smooth surfaces by a singularization process. Then $$g^{S} = \left(\sum_{i=1}^{n}{g_{i}}\right) + [D - (n - 1)],$$ where $D$ is the number of double loop identification in $\mathcal{L}(\sqcup_{i=1}^{n}M_{g_{i}})$.
\end{Lem}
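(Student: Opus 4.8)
The plan is to prove the formula in three stages: first reduce to a singularization consisting only of the $D$ double loop identifications, and then pin down $g^{S}$ by matching a lower and an upper bound. Throughout I use that cutting along a two-sided simple closed curve in the nonsingular part preserves the Euler characteristic and replaces the curve by two boundary circles (this is just the inclusion--exclusion principle together with $\mathcal{X}(S^{1})=0$, exactly as in Lemmas~\ref{lem:duplo} and~\ref{lem:duplo2}).

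First I would show that a single collapsing or zipping leaves $g^{S}$ unchanged, so that by induction on $C+Z$ these operations may be discarded. Fix a maximal family of disjoint simple closed curves in the nonsingular part realizing the genus. Since a collapse (resp. zip) only contracts a single \emph{connected} set $D$ to a point $p$ (resp. to an arc $d$), the curves can be isotoped off that set and hence survive, unchanged, in the nonsingular part afterwards. For one inequality, contracting a connected set only identifies points, which can merge complementary pieces but never split them, so any family keeping the surface connected before the operation still keeps it connected after; this gives $g^{S}$ before $\le g^{S}$ after. For the reverse, if a family disconnects the surface before the operation, the connected contracted set $D$ lies in a single complementary piece, so the contraction merges points only \emph{within} that piece and the cut surface stays disconnected; contrapositively, any family keeping the singularized surface connected already kept the original connected. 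Hence both inequalities hold, and I may assume the singularization uses only the $D$ identifications, reducing the claim to $g^{S}=G+D-(n-1)$.

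Next I would set up the combinatorial model and the lower bound. Encode the identifications by a graph $\Gamma$ whose vertices are the components $M_{g_{i}}$ and whose $D$ edges are the identifications (a loop-edge when both loops lie on the same component). Connectedness of $S$ makes $\Gamma$ connected, so its cycle rank is $D-(n-1)$; fix a spanning tree $T$, leaving $D-(n-1)$ ``extra'' edges. For the lower bound I exhibit $G+\bigl(D-(n-1)\bigr)$ disjoint curves in the nonsingular part: in each $M_{g_{i}}$ take a standard system of $g_{i}$ non-separating curves whose complement is connected and planar, and for each extra edge $e$ add one curve $c_{e}$ parallel to the identification circle $\gamma_{e}$ on one of its sheets. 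Cutting the handle systems keeps every $M_{g_{i}}$ connected and leaves all tree identifications intact, so the components remain joined; each $c_{e}$ only peels off a collar of $\gamma_{e}$ that stays attached, through $\gamma_{e}$, to the uncut side. Thus $S$ stays connected and $g^{S}\ge G+D-(n-1)$.

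Finally, the upper bound. Suppose $c_{1},\dots,c_{m}$ are disjoint simple closed curves in the nonsingular part whose removal leaves $S$ connected, and view them inside the disjoint union $\sqcup_{i}M_{g_{i}}$ before identification (they avoid every identified loop). Let $M_{g_{i}}$ contain $m_{i}$ of the curves and split into $r_{i}$ fragments when cut along them; writing $\sigma_{i}\ge 0$ for the total genus of those fragments, the Euler-characteristic bookkeeping $2r_{i}-2\sigma_{i}-2m_{i}=2-2g_{i}$ gives $m_{i}\le g_{i}+r_{i}-1$, hence $m=\sum_{i}m_{i}\le G+R-n$ with $R=\sum_{i}r_{i}$. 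Re-performing the $D$ identifications joins these $R$ fragments according to $\Gamma$, and connectedness of the cut surface forces this $D$-edge graph on $R$ vertices to be connected, so $R\le D+1$. Combining, $m\le G+R-n\le G+D-(n-1)$, matching the lower bound. I expect this last stage to be the crux: the decisive move is to \emph{undo} the identifications, apply the additive estimate $m_{i}\le g_{i}+r_{i}-1$ on each genuine smooth piece, and then feed the fragment count $R$ back through the connectivity constraint $R\le D+1$ imposed by re-gluing. The delicate point in Stage~1 --- that contracting a connected set can only merge, never separate, complementary pieces --- is what secures invariance under collapsing and zipping, and merits care precisely because those operations \emph{do} change $\mathcal{X}$ (Lemma~\ref{lem:EulerCone}) while leaving $g^{S}$ fixed.
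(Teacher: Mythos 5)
Your architecture is genuinely different from the paper's: the paper proves the formula by induction on the number $k=D-(n-1)$ of exceeding identifications, whereas you give a direct two-sided bound, with an explicit curve system for the lower bound and an Euler-characteristic/graph-connectivity count for the upper bound. Your Stage~3 is the strongest part of the proposal: the bookkeeping $m_{i}\le g_{i}+r_{i}-1$ on each smooth piece combined with $R\le D+1$ is clean, correct, and in fact settles the upper bound in full generality, something the paper's inductive step does not explicitly do (it only exhibits one additional curve, i.e.\ the lower-bound direction). However, Stage~1 contains a genuine gap: the claim that the curves of a maximal family ``can be isotoped off'' the contracted set is false. The contracted set meets the surface exactly in the loop $\alpha$ being collapsed or zipped, and a curve of the family may intersect $\alpha$ essentially. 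Concretely, on a torus with $\alpha$ a meridian assigned a collapse, the family consisting of the longitude realizes the genus $1$ before the operation, and the longitude meets $\alpha$ once; since their geometric intersection number is $1$, no isotopy makes them disjoint, so this family does not survive into the nonsingular part of the collapsed surface. This breaks exactly the direction ($g^{S}$ before $\le$ $g^{S}$ after) that relies on the claim; the reverse direction, via ``quotients merge complementary pieces but never split them,'' is fine, because a family in the nonsingular part after the operation automatically avoids $\alpha$. What you actually need is the \emph{existence} of some maximal family avoiding $\alpha$ (in the torus example, a parallel copy of $\alpha$), and that existence is essentially an instance of the lemma itself, not a consequence of the contracted set being connected.

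The gap is repairable inside your own framework, because Stage~1 can be dispensed with entirely: your Stage~3 argument goes through verbatim when collapses and zips are present (each such loop avoids the curves, hence lies in a single fragment, and quotienting inside a fragment neither disconnects it nor creates new adjacencies between fragments, so connectedness of $S$ minus the curves still forces $R\le D+1$); and Stage~2 can be run directly by requiring all curve systems to avoid \emph{every} loop of $\mathcal{L}$, not only the identification circles. On that point there is a second, smaller gap you should close: your ``standard system'' in $M_{g_{i}}$ must avoid the loops of $\mathcal{L}$ lying on $M_{g_{i}}$ --- when such a loop is non-separating, the standard system must be replaced by genus curves of the pieces obtained by cutting along those loops, plus cycle-breaking parallel copies --- and each $c_{e}$ must be placed in an annular neighborhood of $\gamma_{e}$ so that the piece it splits off is exactly a collar meeting $\gamma_{e}$. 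With a careless placement (for instance, $c_{e}$ parallel to $\gamma_{e}$ but cobounding with another curve of the system an annulus containing no identification circle), cutting orphans that annulus and disconnects $S$, so the lower-bound family fails. These placement details are of the same nature as the care the paper takes in its base case, and they are needed for your Stage~2 to be a proof rather than a sketch.
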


\begin{proof}

Notice that, in order for a singularized surface $S(\sqcup_{i=1}^{n}M_{g_{i}})$ to be connected, the minimum number $D$ of double loop identifications in $\mathcal{L}(\sqcup_{i=1}^{n}M_{g_{i}})$ needed to achieve this is $n - 1$. Thus, define $k = D - (n - 1)$ as the  \textcolor{black}{\bf number of exceeding double loop identifications in the singularization}. The proof will follow by induction on $k$.
\begin{itemize}
    \item[a) ] First, suppose $k = 0$, that is, $D = n - 1$;
    
     \textcolor{black}{In the nonsingular} case, there is a  \textcolor{black}{one-to-one} correspondence between the genus $g_{i}$ and the number of handles  on the surface $M_{g_{i}}$.  \textcolor{black}{So} for each handle, one can pick either one of its handle loops or one of its tunnel loops to be removed, and  \textcolor{black}{the number of handles} gives the maximum number $g_{i}$ of  \textcolor{black}{disjoint} simple closed curves that can be removed from $M_{g_{i}}$ without disconnecting it. 
    
    In the singular case, one can proceed in a similar fashion, since the genus, $g^{S}$,  is equivalent to the maximum number of simple closed curves in $\sqcup_{i=1}^{n}M_{g_{i}}$, not intersecting loops in $\mathcal{L}(\sqcup_{i=1}^{n}M_{g_{i}})$, which can be removed from $S(\sqcup_{i=1}^{n}M_{g_{i}})$ without disconnecting it. 
    Given a handle in $\sqcup_{i = 1}^{n}M_{g_{i}}$, if one of its handle (resp. tunnel) loops is in $\mathcal{L}(\sqcup_{i=1}^{n}M_{g_{i}})$, one chooses a handle (resp. tunnel) loop to be removed. Otherwise, one can choose between removing a handle or a tunnel loop.
    
   Proceeding as above for each  handle  in $\sqcup_{i=1}^{n}M_{g_{i}}$, one removes a total of $\sum_{i=1}^{n}g_{i}$ simple closed curves in the \textcolor{black}{nonsingular} part of $S(\sqcup_{i=1}^{n}M_{g_{i}})$ without disconnecting it. Thus, it follows that $g^{S} \geq \sum_{i=1}^{n}g_{i}$.
    
    Suppose by contradiction that $g^{S} > \sum_{i=1}^{n}g_{i}$. Then there is one more simple closed curve $\alpha$ in the  \textcolor{black}{nonsingular} part of $S(\sqcup_{i=1}^{n}M_{g_{i}})$ that can be removed without disconnecting it. However, $\alpha$ must lie in $M_{g_{i}}$ for some $i = 1, \ldots, n$. This means that $g_{i} + 1$ simple closed curves are being removed from $M_{g_{i}}$. Hence $M_{g_{i}}$ becomes \textcolor{black}{disconnected.  Now} we have $n + 1$ disjoint connected components in $\sqcup_{i=1}^{n}M_{g_{i}}$ and only $D = n - 1$ double loop operations in $\mathcal{L}(\sqcup_{i=1}^{n}M_{g_{i}})$; \textcolor{black}{this} is clearly not enough to obtain a connected space. Thus $S(\sqcup_{i=1}^{n}M_{g_{i}})$ is not connected after the removal of $\alpha$ which is a contradiction.
    
    Therefore, $g^{S} = \sum_{i=1}^{n}g_{i}$.
    
    \item[b) ] Suppose \textcolor{black}{$k\geq 1$ and}  the formula holds in the case that there are $k - 1$ exceeding double loop identifications in $\mathcal{L}(\sqcup_{i=1}^{n}M_{g_{i}})$. That is, $$g^{S} = \sum_{i=1}^{n}{g_{i}} + [k - 1].$$
    
 We prove that the formula still holds if a $k$-th exceeding double loop identification is performed.
   
  Suppose $\mathcal{L}(\sqcup_{i=1}^{n}M_{g_{i}})$ contains $k$ exceeding double loop identifications and determines the singularized surface $S(\sqcup_{i=1}^{n}M_{g_{i}})$. 
   Let $S^\prime(\sqcup_{i=1}^{n}M_{g_{i}})$ be the singularized surface determined by $\mathcal{L}(\sqcup_{i=1}^{n}M_{g_{i}})$ with $k-1$ exceeding double loop identifications performed, leaving out the pair of disjoint  loops $(\alpha, \alpha^\prime)$ that will eventually be doubly identified.
    By the induction hypothesis, a total of $g^{S^\prime}=\sum_{i=1}^{n}{g_{i}} + [k - 1]$  \textcolor{black}{disjoint} simple closed curves can be removed from the \textcolor{black}{nonsingular} part of $S^\prime(\sqcup_{i=1}^{n}M_{g_{i}})$ without disconnecting it.        The removal of any other \textcolor{black}{simple closed} curve $\gamma$ in the \textcolor{black}{nonsingular} part of $S^\prime(\sqcup_{i=1}^{n}M_{g_{i}})$ will divide it into two disjoint connected components. One can choose  $\gamma$ in such a way that  these disjoint connected components $S_1^\prime$ and $S_2^\prime$ contains  the loops $\alpha$ and $ \alpha^\prime$ respectively. Therefore, by performing the double loop identification on $(\alpha, \alpha^\prime)$, these two components $S_1^\prime$ and $S_2^\prime$ become connected forming $S(\sqcup_{i=1}^{n}M_{g_{i}})$. Hence, we have shown that $S(\sqcup_{i=1}^{n}M_{g_{i}})$ remains connected after the removal of $\gamma$, meaning that:  $$g^{S} = \sum_{i=1}^{n}{g_{i}} + [k - 1] + 1 = \left(\sum_{i=1}^{n}{g_{i}}\right) + k = \left(\sum_{i=1}^{n}{g_{i}}\right) + [D - (n - 1)],$$ concluding the proof.

\end{itemize}

\end{proof}

\paragraph{Example.} In Figure \ref{fig:genus}, a connected singularized surface is obtained from the disjoint union of smooth surfaces, specifically a sphere $M_{0}$, a torus $M_{1}$, and a $3$-torus $M_{3}$ by performing the operations in $$\mathcal{L}(M_{0}\sqcup M_{1} \sqcup M_{3}) = \{ (M_{1}, \ell_{1}; M_{3}, \ell_{4}, D), (M_{1}, \ell_{2}; M_{3}, \ell_{6}, D), (M_{0}, \ell_{3}; M_{3}, \ell_{5}, D)  \}.$$
By Lemma \ref{lem:genus},  $g^{S} = 1 + 0 + 3 +[ 3 - (3 - 1)] = 5$. \textcolor{black}{In Figure  \ref{fig:genus}, we show five disjoint curves missing the singularities that don't separate the singularized surface.}
\begin{figure}[!h]
    \begin{equation*}
            \begin{tikzcd}
                \begin{overpic}[scale=0.5]{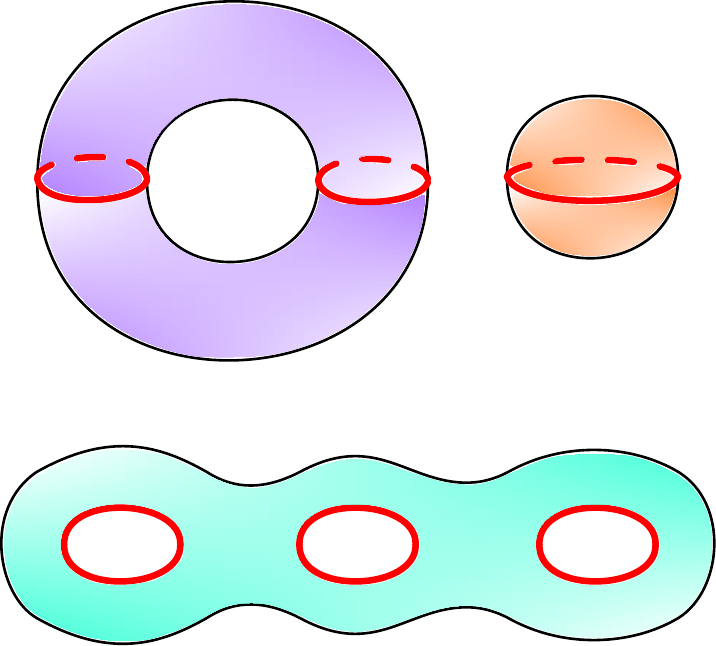}
		\put(-5,62){$\ell_{1}$}
		\put(34,62){$\ell_{2}$}
		\put(98,62){$\ell_{3}$}
		\put(13,11.5){$\ell_{4}$}
		\put(46,11.5){$\ell_{5}$}
		\put(79,11.5){$\ell_{6}$}
		\put(77,90){$M_{0}$}
		\put(27,105){$M_{1}$}
		\put(43,-16){$M_{3}$}
	\end{overpic} \hspace{0.8cm}  \xrightarrow[identification]{Double \ loop}  \includegraphics[scale=0.5]{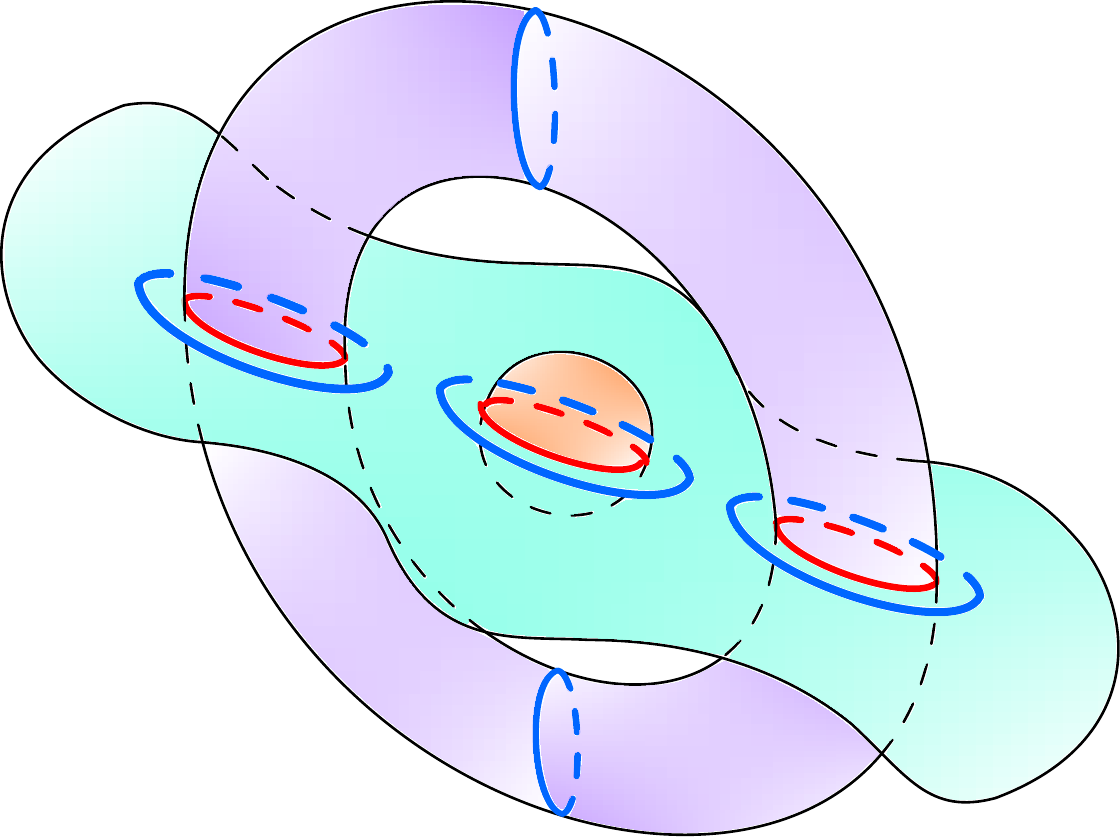} 
            \end{tikzcd}
        \end{equation*}
		\caption{Genus $5$ singularized surface}
		\label{fig:genus}
\end{figure}

\begin{Teo}
Let $S(\sqcup_{i=1}^{n}M_{g_{i}})$ be a connected singularized surface obtained from the disjoint union of smooth surfaces $\{M_{g_{i}} \ | \ i = 1, \ldots n\}$ by the singularization process determined by $\mathcal{L}(\sqcup_{i=1}^{n}M_{g_{i}})$.  Then the Euler characteristic of  $S(\sqcup_{i=1}^{n}M_{g_{i}})$ is given by: $$\mathcal{X}(S(\sqcup_{i=1}^{n}M_{g_{i}})) = 2 - 2g^{S} + 2D + C + Z,$$ where $C$, $Z$ and $D$ are, respectively, the number of   \textcolor{black}{collapses,  zips}  and double loop identifications in $\mathcal{L}(\sqcup_{i=1}^{n}M_{g_{i}})$.
\end{Teo}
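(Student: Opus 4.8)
The plan is to derive this formula purely algebraically, by combining the two results already established—the Euler-characteristic count of Theorem~\ref{teo:forte} and the genus count of Lemma~\ref{lem:genus}—and eliminating the total smooth genus $G = \sum_{i=1}^{n} g_i$ between them. No further geometric construction is needed, since all of the topological bookkeeping (additivity of $\mathcal{X}$ under the loop operations, and the effect of double loop identifications on the genus) has already been carried out in the preceding lemmas.

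First I would invoke Theorem~\ref{teo:forte}, which expresses the Euler characteristic as
$$\mathcal{X}(S(\sqcup_{i=1}^{n}M_{g_{i}})) = 2n - 2G + C + Z,$$
a value that, crucially, does not depend on the number $D$ of double loop identifications. Next, since the present theorem assumes that $S(\sqcup_{i=1}^{n}M_{g_{i}})$ is connected, Lemma~\ref{lem:genus} applies and gives
$$g^{S} = G + [D - (n-1)],$$
which I would rearrange to solve for the total smooth genus as $G = g^{S} - D + n - 1$. Substituting this expression for $G$ into the formula from Theorem~\ref{teo:forte} and simplifying yields
$$\mathcal{X}(S(\sqcup_{i=1}^{n}M_{g_{i}})) = 2n - 2\bigl(g^{S} - D + n - 1\bigr) + C + Z = 2 - 2g^{S} + 2D + C + Z,$$
which is precisely the claimed identity.

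The one point requiring care—more a verification than an obstacle—is ensuring the hypotheses of both inputs are simultaneously valid. Theorem~\ref{teo:forte} holds for an arbitrary singularization, connected or not, whereas Lemma~\ref{lem:genus} requires connectedness; this is exactly the standing assumption of the present statement, and the necessary lower bound $D \geq n-1$ for connectedness is already incorporated into the genus formula through the term $D - (n-1)$. Once this compatibility is noted, the result follows immediately from the substitution above.
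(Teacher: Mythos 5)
Your proposal is correct and is essentially identical to the paper's own proof: both invoke Theorem~\ref{teo:forte} for $\mathcal{X} = 2n - 2G + C + Z$, solve Lemma~\ref{lem:genus} for $G = g^{S} - D + (n-1)$, and substitute. Your added remark about checking the compatibility of hypotheses (connectedness and $D \geq n-1$) is a sound, if implicit, point that the paper does not spell out.
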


\begin{proof}
By Theorem \ref{teo:forte}, we have that:
\begin{equation}\label{eq:teo}
    \mathcal{X}(S(\sqcup_{i=1}^{n}M_{g_{i}})) = 2n - 2G + C + Z
\end{equation}
where $G=\sum_{i=1}^{n}{g_{i}}$.

By Lemma \ref{lem:genus}, 
\begin{equation}\label{eq:genus}
    G = g^{S} - D + (n - 1)
\end{equation}

Hence, the result follows by substituting (\ref{eq:genus}) in (\ref{eq:teo}):
\begin{eqnarray*}
    \mathcal{X}(S(\sqcup_{i=1}^{n}M_{g_{i}})) & = & 2n - 2[g^{S} - D + (n - 1)] + C + Z \\
    & = & 2 - 2g^{S} + 2D + C + Z.
\end{eqnarray*}

\end{proof}

We conclude this article by remarking that many interesting questions arise in the context of singularization. For instance, one can explore the dependency of the singularized surface to the loop operations assigned in a singularization. When are different assignments of loop operations topologically equivalent? What are the ranges of the Euler characteristic attainable in this case? 

Note that, in general,  the Euler characteristic is not a complete topological invariant for singularized surfaces, two singularized surfaces may have the same Euler characteristic but not be homeomorphic.
 Indeed, it is easy to find two singularized surfaces having the same Euler characteristic but with different singular sets. Are the singularized surfaces given here classifiable? 

More complex singularization operations can be investigated. A $3$-sheet cone and a triple crossing are examples of more degenerate singular sets that appear to be attainable by quotient maps similar to the collapsing and the double loop identification, respectively. Are all the singular sets produced by quotient maps of loops more degenerated cases of the singularities discussed here?

Also one can't help but wonder the effect on the Euler Characteristic of simple loop operations on a closed surface S where the family $L$ of loops are not necessarily disjoint. 

The explicit computation of the Betti numbers of a singularized surface can be posed as well, since their alternating sum yields another proof for the Euler characteristic formula presented here.
Moreover, one can search if there is a relation between the Betti numbers of  a singularized surface and its genus, since  in the smooth case the genus of a surface is half its first Betti number.

There are many interesting and accessible questions that can be taken up from where this article left off. We entrust our reader will accept the challenge.

\end{document}